\newcommand{\mc}{\mathcal}
\newcommand{\Vol}{\mathrm{Vol}}
\newcommand{\sign}{\mathrm{sign}}
\newcommand{\algvol}{\mathrm{algvol}}
\newcommand{\leftquotient}{\setminus}
\newcommand{\bR}{\mathbb{R}}
\newcommand{\bQ}{\mathbb{Q}}
\newcommand{\bZ}{\mathbb{Z}}
\newcommand{\bH}{\mathbb{H}}
\newcommand{\bN}{\mathbb{N}}
\newcommand{\Id}{{\mathbf{1}}}
\newcommand{\bv}{\mathbf{v}}
\newcommand\co{\colon\thinspace}
\newtheorem{theorem}{Theorem}[section]
\newtheorem{lemma}[theorem]{Lemma}
\newtheorem{corollary}[theorem]{Corollary}
\newtheorem{proposition}[theorem]{Proposition}
\newtheorem{question}[theorem]{Question}
\newtheorem{conjecture}[theorem]{Conjecture}
\newtheorem{claim}[theorem]{Claim}
\theoremstyle{definition}
\newtheorem{remark}[theorem]{Remark}
\newtheorem{definition}[theorem]{Definition}
\begin{document}

\author[K. Fujiwara]{Koji Fujiwara}
\address{Tohoku University}
\email{fujiwara@math.is.tohoku.ac.jp}

\author[J. F. Manning]{Jason Fox Manning}
\address{University at Buffalo, SUNY}
\email{j399m@buffalo.edu}

\title{Simplicial volume and fillings of hyperbolic manifolds}

\thanks{Koji Fujiwara is supported in part by a Grant-in-Aid for Scientific
Research (No. 19340013).  
Jason Manning is supported in part by NSF grant
DMS-0804369.}

\begin{abstract}
Let $M$ be a hyperbolic $n$--manifold whose cusps have torus
cross-sections.  
In \cite{FM1}, the authors constructed a variety of
nonpositively and negatively curved spaces as ``$2\pi$--fillings'' of
$M$ by replacing the cusps of $M$ with compact  ``partial cones'' of
their boundaries.
These $2\pi$--fillings are closed pseudomanifolds, and so 
have a fundamental class.  We show that the simplicial volume of 
any such $2\pi$--filling is positive, and bounded above by
$\frac{\mathrm{Vol}(M)}{v_n}$, where $v_n$ is the volume of a regular
ideal hyperbolic $n$--simplex.  This result generalizes the fact that
hyperbolic Dehn filling of a $3$--manifold does not increase
hyperbolic volume. 

In particular, we obtain information about the simplicial volumes of
some $4$--dimensional homology spheres described by Ratcliffe and
Tschantz, answering a question of Belegradek and establishing 
the existence of $4$--dimensional homology spheres with positive
simplicial volume.  
\end{abstract} 

\maketitle


\section{Introduction}
Simplicial volume was defined for manifolds by Gromov in
\cite{gromov:vbc}.
For an $n$-dimensional closed manifold $M$ of 
constant sectional curvature $K=-1$, 
the simplicial volume $\|M\|$ is proportional to the 
(Riemannian) volume $\Vol(M)$,
and the ratio is the volume of a regular ideal hyperbolic
$n$--simplex; in particular, it
depends only on the dimension
\cite[0.4]{gromov:vbc}.
More generally, \cite[0.3,0.5]{gromov:vbc}, for constants $a\le b< 0$, there
is a constant $C=C(a,b,n)>1$ so that if the sectional curvatures $K$ satisfy
$a \le K \le b$, then
\begin{equation}\label{propclosed}
 \Vol(M)/C \le \|M\| \le C\Vol(M).
\end{equation}

This note is motivated in part by the (apparently open) question:
\begin{question}\label{q:manif}
  Fix $v>0$, and let $\mathcal{M}_{n,v}$ be the set
  of (homotopy classes of) closed $n$-manifolds whose sectional
  curvatures are all negative, and whose simplicial
  volume is bounded above by $v$.  Is $\mathcal{M}_{n,v}$ finite
  whenever $n\geq 4$?
\end{question}

Some comments, fixing $n\geq 4$ and $v>0$:
\begin{itemize}
\item The set $\mathcal{M}_{n,v}$ contains only finitely many constant
  curvature manifolds, even up to diffeomorphism.  This follows from Wang's finiteness
  theorem \cite{W} together with the proportionality 
  of simplicial volume to hyperbolic volume for closed manifolds (see, e.g., \cite[C.4]{BP92}).
\item  Fix $b>0$.  The set $\mathcal{M}_{n,v}$ contains only finitely
  many  diffeomorphism classes of manifolds with sectional curvatures
  $K$ pinched between $-1$ and $b$.
  Indeed, as noted in the inequality \eqref{propclosed}, the
  curvature assumption $-1\le K \le b<0$ 
  together with an upper bound on the simplicial volume gives
  an upper bound on the volume \cite[0.3, 1.2]{gromov:vbc}.
  On the other hand, 
  the condition $-1 \le K <0$ together with 
  the upper bound on the volume gives an upper bound on the diameter
  \cite[1.2]{G} and a lower bound on the volume \cite[1.3 A]{G}.
  Now, Cheeger's finiteness theorem applies 
  \cite[1.4]{G}.

  Thus if there are
  infinitely many diffeomorphism types of closed $n$--manifolds of
  bounded volume, with sectional curvatures $\le -1$,
  there cannot be any uniform lower bound on
  their sectional curvatures, so long as $n\ge 4$.  So a negative
  answer to Question \ref{q:manif} would have to involve manifolds
  without uniformly pinched curvature.
  For $n>3$ there do exist sequences of negatively curved
  manifolds whose curvatures cannot be
  uniformly pinched.  Gromov--Thurston \cite[0.5]{GT} describe a
  sequence $\{V_i\}$ of negatively curved
  closed manifolds which are $i$-fold ramified covers of a fixed
  closed manifold $V$ of constant negative curvature, and show the
  curvatures of the $V_i$
  cannot be uniformly pinched.
  We remark that such a sequence of ramified covers $V_i$ must satisfy
  $0<i\|V\| \le \|V_i\|$, so it 
  does not imply a negative answer to Question \ref{q:manif}.

\item 
  The sets $\mathcal{M}_{3,v}$ are infinite for large enough $v$, even if one
  restricts to constant curvature.  This is because of Thurston's
  hyperbolic Dehn surgery theorem, and Thurston's theorem (which we
  generalize in this note) that volume decreases under hyperbolic Dehn
  filling. 
\end{itemize}

It is easy to extend the definition 
of simplicial volume to closed
pseudomanifolds.  This is explained in Section \ref{s:defs} (cf. \cite{Ya97}).
Pseudomanifold versions of the bounds in
\eqref{propclosed}
have been established by Yamaguchi
\cite[Theorem 0.2 and 0.5]{Ya97}
as follows.  
First, if $X$ is an orientable $n$-dimensional, compact, geodesically
complete (locally) CAT($-1$) pseudo-manifold, then 
\begin{equation}\label{yamaguchilower}
\Vol(X)\leq  \frac{\pi}{(n-1)!} \|X\|.
\end{equation}
Yamaguchi conjectures that the factor of $\frac{\pi}{(n-1)!}$ in
\eqref{yamaguchilower} can be
replaced by the volume of a regular ideal hyperbolic $n$--simplex.
Second, if $X$ is an $n$-dimensional compact orientable
geodesic space without boundary such that
$K \ge -1$ in the sense of Alexandrov
and $X$ is ``locally Lipschitz contractible'', then 
  \[\|X\|\leq n!(n-1)^n\Vol(X),\]
where $\Vol(X)$ is the $n$-dimensional 
Hausdorff measure of $X$.  Both bounds cannot be applied at once
except in the manifold setting, since upper and lower curvature bounds
can only coexist in a manifold.  We do not use either bound in this
paper, except in Remark \ref{yamaguchiapp} below.

\medskip
We do not 
answer Question \ref{q:manif}, but we answer the analogous question
for negatively curved pseudomanifolds:
\begin{question}\label{q:pseud}
  Let $n\geq 4$.
  Fix $v>0$, and let $\mathcal{P}_{n,v}$ be the set
  of (homotopy classes of) closed locally CAT$(-1)$ $n$--pseudomanifolds whose simplicial
  volume is bounded above by $v$.  Is $\mathcal{P}_{n,v}$ finite?
\end{question}
Specifically, we apply a ``Dehn filling'' construction to give a
negative  answer to Question \ref{q:pseud}.

Let $M$ be a compact $n$--manifold with boundary a union of $m$
tori, whose
interior $V = \mathrm{Int}(M)$ admits a complete hyperbolic metric of finite volume.
Let $\overline{M}\cong M$ be a compact manifold
obtained from $V$
by removing horospherical neighborhoods of the cusps.  The
manifold $\overline{M}$ has $m$ boundary components $N_1,\ldots,N_m$,
each of which inherits a flat metric from the hyperbolic metric on $\mathrm{Int}(M)$.
In \cite{FM1}, we defined and studied \emph{fillings} of $M$,
which are topological spaces obtained by gluing the product of a torus
with the cone on another torus to each boundary component of
$\overline{M}$.  Here is an equivalent definition.
\begin{definition}\label{d:filling}
  For each $i\in \{1,\ldots,m\}$, let $T_i<N_i$ be a
  $k_i$--dimensional submanifold
  which is totally geodesic in $N_i$.  The
  $(n-1)$--torus $N_i$ is foliated by parallel copies of $T_i$, with
  leaf space $V_i$ homeomorphic to an $(n-1-k_i)$--dimensional torus.
  Let $h_i\co N_i\to V_i$ be the 
  quotient map, and define $M(T_1,\ldots,T_m)$ to be the quotient
  $\overline{M}/\sim$, where $x\sim x'$ if $h_i(x)=h_i(x')$ for some
  $i$.  The space $M(T_1,\ldots,T_m)$ is called a \emph{filling of
    $M$}. If all of the tori $T_i$ have
  injectivity radius strictly larger than $\pi$,
  $M(T_1,\ldots,T_m)$ is called a \emph{$2\pi$--filling of $M$}.
\end{definition}
In case $M$ is $3$--dimensional, and each $T_i$ is one-dimensional, 
fillings $M(T_1,\ldots,T_m)$ are the same as ordinary Dehn fillings of
$M$.  In \cite{FM1}, we generalize work in \cite{BlHo,Sc,MS} (including the
Gromov-Thurston $2\pi$ Theorem) to prove:
\begin{theorem}\label{t:filling}\cite{FM1}
Let $M$ be an $n$--dimensional hyperbolic manifold as above.
Every $2\pi$--filling $M(T_1,\ldots,T_m)$ of $M$ admits a locally
CAT$(0)$ metric, so the universal cover is CAT$(0)$ with isolated
flats. 
If every $T_i$ has dimension at least $n-2$, then 
$M(T_1,\ldots,T_m)$ admits a locally CAT$(-1)$ metric.
\end{theorem}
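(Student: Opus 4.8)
The plan is to build the metric on $M(T_1,\dots,T_m)$ one cusp at a time, keeping the hyperbolic metric on the compact part and replacing each cusp neighbourhood by a suitably warped ``partial cone.'' Fix the complete hyperbolic metric on $V=\mathrm{Int}(M)$; near the $i$-th cusp it has the standard form $dt^2+e^{2t}g_{N_i}$ on $N_i\times(-\infty,t_0]$, where $g_{N_i}$ is the flat metric on the cross-section. The totally geodesic foliation of $N_i$ by parallel copies of $T_i$ makes $N_i$ a flat $T_i$--bundle over the flat torus $V_i$ whose horizontal distribution is integrable and whose fibers are totally geodesic, so locally (and on a suitable finite cover) $N_i$ is the Riemannian product $T_i\times V_i$; this is all that matters for the local metric and curvature computations. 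I would replace each cusp neighbourhood $\{t\le t_0\}$ by the partial cone $C_i=\mathrm{Cone}(T_i)\times V_i$, glued along $N_i$, carrying the doubly warped metric
\[
  \hat g_i \;=\; dr^2 + \beta_i(r)^2\, g_{T_i} + \gamma_i(r)^2\, g_{V_i},\qquad r\in[0,r_0],
\]
with $\beta_i(0)=0$ and $\gamma_i(0)=c_i>0$, so that $C_i$ acquires a cone singularity precisely along the totally geodesic flat torus $S_i=V_i\times\{0\}$, whose transverse link is the flat torus $(T_i,g_{T_i})$. Near $r=0$ I would take $\beta_i(r)=\sinh r$ and $\gamma_i(r)=c_i\cosh r$, and near $r=r_0$ interpolate both to the exponential $e^{r-r_0+t_0}$, so that $\hat g_i$ agrees with the hyperbolic cusp metric on a collar of $N_i$; the glued object is then a metric on $M(T_1,\dots,T_m)$ that is $C^\infty$ away from $\bigcup_i S_i$.

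I would then compute the sectional curvatures of $\hat g_i$ from the warped-product formulas: planes containing $\partial_r$ give $-\beta_i''/\beta_i$ or $-\gamma_i''/\gamma_i$; planes tangent to a flat slice give $-(\beta_i'/\beta_i)^2$ or $-(\gamma_i'/\gamma_i)^2$; mixed planes give $-\beta_i'\gamma_i'/(\beta_i\gamma_i)$. For the $\sinh/\cosh$ choice this gives $K(\partial_r,T_i)=K(\partial_r,V_i)=K(T_i,V_i)=-1$, $K(T_i,T_i')=-\coth^2 r\le -1$, and $K(V_i,V_i')=-\tanh^2 r\in(-1,0]$; thus off $S_i$ the metric has all sectional curvatures $\le 0$ (indeed $<0$), and all $\le -1$ exactly when there are no planes tangent to $V_i$, i.e.\ when $\dim V_i\le 1$, i.e.\ when $\dim T_i\ge n-2$. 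This restriction is unavoidable: if $\dim V_i\ge 2$ then $\pi_1(S_i)\cong\bZ^{\dim V_i}$ injects into $\pi_1 M(T_1,\dots,T_m)$, so the filling can admit no locally CAT$(-1)$ metric, whereas $\bZ^k$ subgroups are harmless for nonpositive curvature. The interpolation toward $r_0$ must keep $\beta_i',\gamma_i'>0$, $\beta_i''\ge\beta_i$, $\gamma_i''\ge\gamma_i$, $\beta_i'\ge\beta_i$ and $\beta_i'\gamma_i'\ge\beta_i\gamma_i$ in the CAT$(-1)$ case (the analogous non-strict sign conditions suffice in the CAT$(0)$ case); producing such functions is where one adapts the work underlying the Gromov--Thurston $2\pi$--Theorem and its higher-dimensional refinements \cite{BlHo,Sc,MS}.

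It remains to verify the local CAT condition and pass to the universal cover. Off $\bigcup_i S_i$ the metric is smooth with the stated curvature bound, hence locally CAT$(0)$ (resp.\ CAT$(-1)$) by Rauch comparison. A neighbourhood of a point of $S_i$ is, up to the smooth interpolation described above, a warped product over the hyperbolic cone on $(T_i,g_{T_i})$, and such a cone is CAT$(0)$ --- and, when $\dim V_i\le 1$, CAT$(-1)$ --- precisely because $(T_i,g_{T_i})$ is CAT$(1)$, which holds since $T_i$ has injectivity radius strictly greater than $\pi$ (equivalently, its systole exceeds $2\pi$). Patching the local pictures together --- by Reshetnyak's gluing theorem, or simply because being locally CAT$(\kappa)$ is a local property --- gives a locally CAT$(0)$ metric on every $2\pi$--filling, and a locally CAT$(-1)$ metric when every $\dim T_i\ge n-2$. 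Since $M(T_1,\dots,T_m)$ is compact, the Cartan--Hadamard theorem shows its universal cover is CAT$(0)$, resp.\ CAT$(-1)$. As the metric is strictly negatively curved off $\bigcup_i S_i$, every flat plane in the universal cover lies inside a lift of some $S_i$; one checks these flats are isolated, since the metric is negatively pinched on the complement of any metric neighbourhood of $\bigcup_i S_i$. Hence the universal cover is CAT$(0)$ with isolated flats, and genuinely CAT$(-1)$ exactly when no $S_i$ has dimension $\ge 2$.

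The main obstacle is the interpolation lemma invoked above: one must exhibit warping functions $\beta_i,\gamma_i$ on $[0,r_0]$ agreeing to infinite order with $\sinh$ and $c_i\cosh$ at $0$, equal to an exponential near $r_0$, and obeying the convexity and curvature inequalities throughout. This is precisely the delicate piece of ODE-engineering at the heart of the $2\pi$--Theorem and its generalisations; granting it, the rest of Theorem \ref{t:filling} follows formally from warped-product and Riemannian-submersion geometry together with the standard Cartan--Hadamard and gluing theorems for CAT$(\kappa)$ spaces.
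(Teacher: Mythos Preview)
The present paper does not prove Theorem~\ref{t:filling}; it is quoted from the authors' earlier work \cite{FM1} and used here as input, so there is no proof in this manuscript to compare against.

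That said, your sketch is a faithful outline of the construction carried out in \cite{FM1}: doubly warped metrics $dr^2+\beta(r)^2 g_{T_i}+\gamma(r)^2 g_{V_i}$ on the partial cones, with $(\beta,\gamma)=(\sinh r,\,c_i\cosh r)$ near the singular locus and an interpolation to the exponential cusp profile near the boundary; the standard warped-product curvature formulas then yield $K\le 0$ everywhere on the smooth part and $K\le -1$ precisely when there are no $V_i$--$V_i'$ planes, i.e.\ when $\dim T_i\ge n-2$; and the $2\pi$ hypothesis is exactly what makes the transverse link $(T_i,g_{T_i})$ a CAT$(1)$ space, so that the cone locus satisfies the local comparison condition. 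You also correctly identify the interpolation lemma---producing warping functions matching $\sinh$, $\cosh$ at $r=0$ and the exponential at $r=r_0$ while preserving the required convexity inequalities---as the genuine technical content, and you are honest that you are granting it rather than proving it; this is indeed where \cite{FM1} generalises \cite{BlHo,Sc,MS}. One minor quibble: your verification of isolated flats (``negatively pinched off a neighbourhood of $\bigcup_i S_i$'') is heuristic; a cleaner route is to observe that the only virtually abelian subgroups of rank $\ge 2$ in $\pi_1 M(T_1,\dots,T_m)$ are conjugate into the images of the $\pi_1(V_i)$ and then invoke the Hruska--Kleiner criterion \cite{HK}.
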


Each filling of $M$ is a closed pseudomanifold with singular set equal
to a disjoint union of tori of dimensions $n-(1+\dim(T_1)),\ldots
n-(1+\dim(T_m))$ (see Definition \ref{d:pseudomanifold} for these
terms).
In particular, its simplicial volume is well-defined \cite{Ya97}.
The main result of the current paper is that these fillings have 
positive simplicial volume, bounded above
by the simplicial volume of $M$.
\begin{theorem}\label{t:volumedecreases}
Let $M$ be a compact $n$--manifold with boundary a union of $(n-1)$--tori, and
suppose that $V=\mathrm{Int}(M)$ admits a complete hyperbolic metric of
finite volume $\Vol(V)$.
Let $M(T_1,\ldots,T_m)$ be a $2\pi$--filling of $\mathrm{Int}(M)$.
Then
\[ 0< \|M(T_1,\ldots,T_m)\| \leq \frac{\Vol(V)}{v_n}, \]
where $v_n$ is the volume of a regular ideal hyperbolic $n$-simplex.
\end{theorem}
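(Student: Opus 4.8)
The plan is to prove the two inequalities by separate arguments. The upper bound will come from the observation that the quotient map $\overline{M}\to M(T_1,\dots,T_m)$ has degree one, together with the classical value of the simplicial volume of a finite-volume hyperbolic manifold. Positivity will come from the (relative) hyperbolicity of the fundamental group of the filling and the resulting abundance of bounded cohomology.

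Write $P=M(T_1,\dots,T_m)$ and let $q\co\overline{M}\to P$ be the quotient map of Definition~\ref{d:filling}. Since $q$ identifies points only inside $\partial\overline{M}$ and carries $\partial\overline{M}$ onto the singular locus $\Sigma$ of $P$, a disjoint union of tori of codimension at least $2$, the restriction of $q$ over $P\setminus\Sigma$ is a homeomorphism. Choosing orientations compatibly, $q$ sends the relative fundamental class $[\overline{M},\partial\overline{M}]$ to $[P]$ under the canonical isomorphism $H_n(P;\bR)\to H_n(P,\Sigma;\bR)$ (which holds because $\dim\Sigma\le n-2$). A degree-one map does not raise simplicial volume: a relative fundamental cycle for $(\overline{M},\partial\overline{M})$ pushes forward to a relative cycle for $(P,\Sigma)$ whose boundary lies in $\Sigma$ and --- the components of $\Sigma$ having amenable fundamental group --- can be capped off inside $\Sigma$ at no cost to the $\ell^1$-norm, yielding a genuine fundamental cycle for $P$ (this is routine given the discussion of pseudomanifolds in Section~\ref{s:defs}). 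Hence $\|P\|\le\|\overline{M},\partial\overline{M}\|$, and $\|\overline{M},\partial\overline{M}\|=\Vol(V)/v_n$ by the Gromov--Thurston theorem for complete finite-volume hyperbolic manifolds \cite{gromov:vbc}. This is the right-hand inequality.

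For positivity, observe first that by Theorem~\ref{t:filling} the universal cover of $P$ is CAT$(0)$, hence contractible, so $P$ is aspherical; thus $H^{*}(P;\bR)\cong H^{*}(\pi_1 P;\bR)$, while $H^{*}_b(P;\bR)\cong H^{*}_b(\pi_1 P;\bR)$ unconditionally. Now $\pi_1 P$ is obtained from $\pi_1 M$ --- which is hyperbolic relative to the cusp subgroups $\pi_1 N_i\cong\bZ^{n-1}$ --- by killing the subgroups $\pi_1 T_i$, so $\pi_1 P$ is hyperbolic relative to the free abelian groups $\pi_1 N_i/\pi_1 T_i\cong\pi_1 V_i$ of rank at most $n-2$; this is also the relative hyperbolicity associated to the isolated flats of $\widetilde{P}$. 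Because these peripheral subgroups are amenable, relative and absolute bounded cohomology of $\pi_1 P$ coincide in degrees $\ge 2$; because the corresponding tori have dimension $\le n-2$, their real cohomology vanishes in degrees $n$ and $n-1$, so relative and absolute ordinary cohomology of $\pi_1 P$ coincide in degree $n$. Invoking the surjectivity of the comparison map $H^{n}_b\to H^{n}$ for relatively hyperbolic pairs (work of Mineyev and Mineyev--Yaman; when every $T_i$ has dimension at least $n-2$, $\pi_1 P$ is word-hyperbolic and Mineyev's theorem applies directly), we conclude that the comparison map $H^{n}_b(P;\bR)\to H^{n}(P;\bR)$ is surjective. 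In particular the cohomology class $\omega_P\in H^{n}(P;\bR)$ dual to the fundamental class, normalized by $\langle\omega_P,[P]\rangle=1$, is represented by a bounded cocycle $\widehat{\omega}$; then $1=\widehat{\omega}(z)\le\|\widehat{\omega}\|_\infty\,\|z\|_1$ for every fundamental cycle $z$ of $P$, so $\|P\|\ge 1/\|\widehat{\omega}\|_\infty>0$.

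The main obstacle is positivity, and specifically adapting the bounded-cohomology input to the pseudomanifold $P$: one must realize $\omega_P$ as a class in the relative group cohomology of $(\pi_1 P,\{\pi_1 V_i\})$ and use the comparison-surjectivity theorem in the relatively hyperbolic setting. The case where $P$ is locally CAT$(-1)$ is cleaner, since then $\pi_1 P$ is word-hyperbolic and one needs only Mineyev's theorem for word-hyperbolic groups with no relative bookkeeping. A secondary technical point is the relative/$\ell^1$ accounting in the upper bound --- pushing a relative fundamental cycle forward and re-closing it across the low-dimensional singular locus --- but this is standard.
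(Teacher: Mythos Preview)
Your treatment of the lower bound is essentially the paper's own argument (Proposition~\ref{p:nonzero} and Corollary~\ref{c:lowerbound}): asphericity, relative hyperbolicity of $\pi_1 P$ with amenable peripherals, and Mineyev--Yaman surjectivity yield a bounded representative of the orientation cocycle, so duality gives $\|P\|>0$.

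The gap is in the upper bound. You assert that the pushed-forward boundary ``can be capped off inside $\Sigma$ at no cost to the $\ell^1$--norm'' because $\pi_1\Sigma$ is amenable, and you later call the step ``standard.'' But amenability of $\Sigma$ gives only that the Gromov seminorm vanishes on \emph{homology classes} of $\Sigma$; it does not say that a \emph{specific} null-homologous cycle $z$ can be filled by a chain of arbitrarily small norm. The uniform boundary condition gives a filling with $\|c'\|_1\le K\|z\|_1$, and if the only control on $\|z\|_1=\|\partial c\|_1$ is $(n+1)\|c\|_1$, the resulting fundamental cycle has norm bounded by a fixed multiple of $\|M,\partial M\|$, not by $\|M,\partial M\|$ itself. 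What is actually needed is either (a) a near-optimal relative fundamental cycle $c$ for $(\overline{M},\partial\overline{M})$ with $\|\partial c\|_1$ \emph{arbitrarily small}, or equivalently (b) the isometry of $H_n(P)\to H_n(P,\Sigma)$ under the Gromov seminorm when $\pi_1\Sigma$ is amenable. Statement~(b) is a genuine theorem (a form of Gromov's equivalence theorem, later made precise by several authors), but you neither cite it nor indicate why amenability should yield it.

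This is exactly what the paper's Section~\ref{s:main} is devoted to: it constructively produces, for every $\epsilon>0$, a relative fundamental cycle with norm within $2\epsilon$ of $\|M,\partial M\|$ and boundary norm at most $t_{n-1}\epsilon$, by passing to a carefully chosen finite cover, triangulating the boundary tori, straightening the boundary to an affine chain, and counting simplices. Only then does the capping (via Lemma~\ref{l:thurstonlemma}, which is UBC for tori) cost $O(\epsilon)$. So what you dismiss as a ``secondary technical point'' is in fact the heart of the upper-bound proof; your sketch would become a valid alternative route only if you explicitly invoked the isometry theorem for amenable pairs in place of that construction.
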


\begin{corollary}
  The answer to Question \ref{q:pseud} is ``no,'' for $v$ equal to the
  volume of any hyperbolic $n$--manifold with torus cusps.
\end{corollary}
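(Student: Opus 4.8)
The plan is to read the corollary off from Theorems~\ref{t:filling} and~\ref{t:volumedecreases}, the one extra input being that a fixed cusped hyperbolic $n$--manifold has infinitely many $2\pi$--fillings of pairwise distinct homotopy type. Fix $n\ge 4$, let $V$ be a complete finite-volume hyperbolic $n$--manifold with torus cusps, put $v=\Vol(V)$, and let $\overline{M}$ be a compact core with cusp tori $N_1,\dots,N_m$. I would take each totally geodesic submanifold $T_i\subset N_i$ to have codimension one, so $\dim T_i=n-2\ge 2$, and to have injectivity radius larger than $\pi$; such subtori are abundant, since a fixed flat $(n-1)$--torus contains embedded totally geodesic codimension-one subtori of arbitrarily large systole --- for instance the sublattices $\{x\in\bZ^{n-1}:\langle a,x\rangle=0\}$ for $a=(1,t,t^2,\dots,t^{n-2})$ are primitive and have minimum tending to infinity as $t\to\infty$. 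For any such choice, $X:=M(T_1,\dots,T_m)$ is a closed locally CAT$(-1)$ $n$--pseudomanifold: it is a closed pseudomanifold with toral singular set (as noted after Theorem~\ref{t:filling}), and Theorem~\ref{t:filling} furnishes the locally CAT$(-1)$ metric because every $T_i$ has dimension $\ge n-2$. Its universal cover is therefore CAT$(-1)$, hence contractible, so $X$ is aspherical and its homotopy type is determined by $\pi_1(X)$. Moreover, by Theorem~\ref{t:volumedecreases}, $0<\|X\|\le \Vol(V)/v_n$, so $X\in\mathcal{P}_{n,\Vol(V)/v_n}\subseteq\mathcal{P}_{n,v}$ (the inclusion because $v_n\ge 1$). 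It thus remains to produce infinitely many of these $X$ with non-isomorphic fundamental groups.

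To that end, fix $T_2,\dots,T_m$ and let $T_1=T_1^{(N)}$ range over codimension-one totally geodesic subtori of $N_1$ with $\mathrm{systole}(T_1^{(N)})\to\infty$. Writing $G=\pi_1(\overline{M})$, which is hyperbolic relative to the peripheral subgroups $P_i\cong\bZ^{n-1}$, van Kampen's theorem identifies $\pi_1(X_N)$ with the group-theoretic Dehn filling $G/\llangle A_1^{(N)},A_2,\dots,A_m\rrangle$, where $A_i=\pi_1(T_i)$ is a primitive corank-one summand of $P_i$. Since peripheral subgroups of $G$ are (exponentially) distorted and the systoles blow up, the subgroups $A_1^{(N)}$ eventually avoid any prescribed finite subset of $G$, so the relatively hyperbolic Dehn filling theorem (Osin; Groves--Manning) applies for all large $N$: $\pi_1(X_N)$ is then word hyperbolic, and $P_1\to\pi_1(X_N)$ has kernel exactly $A_1^{(N)}$, so distinct slopes genuinely give distinct filling data, recoverable from the pair consisting of $\pi_1(X_N)$ and the image of $P_1$. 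Finally I would show that a fixed group arises as such a filling from only finitely many of the $A_1^{(N)}$: Mostow--Prasad rigidity makes $\mathrm{Out}(G)$ finite, so each $\mathrm{Aut}(G)$--orbit of filling data is finite, and an isomorphism between two sufficiently deep fillings of $G$ ought to be induced by a peripheral-structure-preserving automorphism of $G$; combining these forces infinitely many isomorphism types among the $\pi_1(X_N)$, and hence infinitely many homotopy types of $2\pi$--fillings inside $\mathcal{P}_{n,v}$.

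The step I expect to be the main obstacle is the last one --- upgrading ``infinitely many distinct filling data'' to ``infinitely many distinct quotient groups.'' This soft pigeonhole requires a rigidity statement for relatively hyperbolic Dehn fillings, to the effect that an isomorphism between two deep enough fillings of $G$ is carried by an automorphism of $G$ respecting the peripheral structure. If such a statement is not available in the needed generality, I would instead lean on the study of these fillings carried out in \cite{FM1}, or detect the variation of $\pi_1(X_N)$ through a hands-on invariant along a carefully chosen subfamily --- for example, arranging along a subsequence that the filling relations kill successively deeper finite-index subgroups of a fixed $\bZ$--summand of $H_1(\overline{M})$, so that the torsion of $H_1(X_N)$ assumes infinitely many values.
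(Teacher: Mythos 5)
Your overall strategy --- take $2\pi$--fillings along codimension-one subtori of injectivity radius $>\pi$ and systole tending to infinity, and invoke Theorems \ref{t:filling} and \ref{t:volumedecreases} for the CAT$(-1)$ structure and the two-sided volume bound --- is the paper's, but two points need attention. The inclusion $\mathcal{P}_{n,\Vol(V)/v_n}\subseteq\mathcal{P}_{n,v}$, which you justify by writing ``$v_n\ge 1$,'' is wrong in exactly the relevant range: the volume $v_n$ of the regular ideal hyperbolic $n$--simplex is \emph{decreasing} in $n$ and is already below $1$ at $n=4$ (about $0.27$), tending to $0$ as $n\to\infty$, so $\Vol(V)/v_n > \Vol(V)$ for $n\ge 4$ and the asserted containment fails. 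The way to read ``the volume of any hyperbolic $n$--manifold'' consistently with the proof is as the simplicial volume $\|M,\partial M\|=\Vol(V)/v_n$ (Theorem \ref{t:proportionality}); then Theorem \ref{t:volumedecreases} gives membership in $\mathcal{P}_{n,v}$ directly and no comparison of $v_n$ with $1$ is needed or available.

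The more substantial issue is the one you already flag. The paper disposes of ``infinitely many homotopy types'' by citing Proposition~2.12 of \cite{FM1}, which says that only finitely many choices of filling data can yield a fixed isomorphism type of fundamental group. You correctly observe that the relatively hyperbolic Dehn filling theorem gives hyperbolicity of the quotients and control of the peripheral kernels, but by itself does not show distinct slopes give non-isomorphic groups: one needs the rigidity statement that an isomorphism between two sufficiently deep fillings of $G$ is induced by a peripheral-structure-preserving automorphism of $G$. That rigidity is precisely the content of the cited [FM1, Prop.~2.12], so your argument has a genuine gap exactly where the paper has a citation. Your fallback via $H_1$--torsion is also not secured as written: the image of $\pi_1(N_1)$ in $H_1(\overline{M};\bZ)$ may have low rank, so varying the codimension-one subtorus $T_1<N_1$ need not change the image of $\pi_1(T_1)$ in $H_1$, and whether the torsion of $H_1(X_N)$ takes infinitely many values depends on the particular $M$.
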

\begin{proof}
  Let $M$, $\overline{M}$, $N_1,\ldots,N_m$ be as in the discussion
  preceding Definition \ref{d:filling}.  For each $j\in
  \{1,\ldots,m\}$, choose a sequence $\{T_i^j\}_{i\in\mathbb{N}}$ of
  totally geodesic codimension $1$ tori in $N_j$, all with $\mathrm{inj}>\pi$, and
  so that
  $\lim_{i\to\infty}\mathrm{inj}(T_i^j) = \infty$.  Let $M_i$ be the
  filling $M(T_i^1,\ldots,T_i^m)$.  Proposition 2.12 of \cite{FM1}
  implies that for any $i$ only finitely many $M_j$ have fundamental
  group isomorphic to $\pi_1M_i$, so the set $\{M_i\}_{i\in
    \mathbb{N}}$ contains infinitely many homotopy classes.  Theorem
  \ref{t:filling} implies that $0<\|M_i\|\leq \frac{\Vol(V)}{v_n}$
  for all $i$.
\end{proof}

\begin{remark}
  In \cite{RT05}, Ratcliffe and Tschantz describe some nonpositively
  curved $4$--di\-men\-sion\-al homology spheres.  These homology
  spheres are obtained by $2\pi$--filling of a cusped hyperbolic
  manifold described by Ivan\v{s}i\'{c} in \cite{Iv}.  They seem to be
  the first exmples of $4$--dimensional homology spheres with
  Riemannian metrics of non-positive sectional curvature (see the
  paragraph after Remark 5 in \cite{Na-We}).  Belegradek in
  \cite{Be06} asked if these homology spheres have positive simplicial
  volume.  Our main theorem answers this question affirmatively, but
  the main part of our argument is to do with the upper bound on
  simplicial volume.  The reader primarily interested in positivity of
  simplicial volume can stop with Subsection \ref{ss:norms}.  In
  particular, we answer Belegradek's question with Corollary
  \ref{c:specialcase} at the end of that subsection.
\end{remark}

It would be interesting to get some further information on the
possible volumes of fillings of $M$.  Hyperbolic Dehn filling of a
$3$--manifold strictly decreases volume, and the volume difference
goes to zero as the length of the filling slope goes to infinity
(see \cite[6.5.6]{Th} for strictness
and \cite{NZ85,HoKe,FKP08} for estimates of the difference).  We conjecture that similar phenomena hold in higher
dimensions.

\begin{conjecture}
The volumes $\|M(T_1,\ldots,T_m)\|$
accumulate on $\frac{\Vol(V)}{v_n}$ but do not attain it.    
\end{conjecture}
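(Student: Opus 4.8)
The plan is to separate the conjecture into two assertions and attack them by different means: (i) that the values $\|M(T_1,\ldots,T_m)\|$ have $\Vol(V)/v_n$ as an accumulation point, realized by a sequence of $2\pi$--fillings whose simplicial volumes tend to $\Vol(V)/v_n$; and (ii) strictness, i.e.\ $\|M(T_1,\ldots,T_m)\| < \Vol(V)/v_n$ for \emph{every} $2\pi$--filling. The upper bound in Theorem~\ref{t:volumedecreases} already supplies ``$\le$''; so for (i) one needs a matching lower bound along a good family, and for (ii) one needs to promote that bound (or the upper bound argument) to a strict inequality.

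For (i), the family to use is the higher-dimensional analogue of ``long'' Dehn fillings. Form $\overline M = \overline M(t_0)$ by removing the horospherical cusp neighborhoods $\{t > t_0\}$ from $V$; the boundary tori $N_i(t_0)$ then carry flat metrics that contract like $e^{-t_0}$ as $t_0\to\infty$, so one may choose totally geodesic tori $T_i^{t_0} < N_i(t_0)$, defined by correspondingly ``deep'' sublattices, with $\mathrm{inj}(T_i^{t_0})\to\infty$ while at the same time $\Vol(\overline M(t_0))\to\Vol(V)$. (In dimension $3$ this is precisely the family of Dehn fillings whose slope lengths go to infinity.) Since the filling construction alters only the cusp neighborhoods, $\mathrm{Int}(\overline M(t_0))$ sits isometrically inside $M(T_1^{t_0},\ldots,T_m^{t_0})$ with its locally CAT$(0)$ (or CAT$(-1)$) metric. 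One should then check that the ``straightening and smearing'' mechanism underlying the positivity half of Theorem~\ref{t:volumedecreases} --- pairing the fundamental class of the filling with a bounded cocycle built from the hyperbolic volume form on this embedded piece, whose supremum norm is $v_n$ --- in fact yields the localized estimate $\|M(T_1^{t_0},\ldots,T_m^{t_0})\|\ge \Vol(\overline M(t_0))/v_n$. Letting $t_0\to\infty$ then squeezes the simplicial volumes between $\Vol(\overline M(t_0))/v_n\to\Vol(V)/v_n$ and $\Vol(V)/v_n$. If the paper's positivity argument is already of this ``volume of an isometrically embedded hyperbolic submanifold'' type, this step is mostly bookkeeping; the only care required is that the positive-dimensional singular locus and the collapsing map do not spoil the cocycle estimate.

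For (ii) --- strictness --- I expect the genuine difficulty. In dimension $3$ one invokes Thurston's theorem that hyperbolic Dehn filling strictly decreases volume, using that the filled manifold is itself hyperbolic, so $v_n\|M(T_1,\ldots,T_m)\| = \Vol(M(T_1,\ldots,T_m)) < \Vol(V)$; there is no hyperbolic metric on the filling when $n\ge 4$, so a different argument is needed. The route I would try is a rigidity-of-efficient-cycles argument: assume $\|M(T_1,\ldots,T_m)\| = \Vol(V)/v_n$, choose fundamental cycles whose norms converge to this value, and use a strict form of the Gromov--Thurston straightening analysis (in the spirit of Thurston's proof of strictness in dimension $3$, or of Francaviglia--Frigerio-type results on straightening in higher dimensions) to force these nearly-minimal cycles to be supported, up to arbitrarily small error, on simplices straightening to regular ideal hyperbolic $n$--simplices that exhaust essentially all of $M(T_1,\ldots,T_m)$. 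This would force the locally CAT$(-1)$ metric to be of constant curvature $-1$ almost everywhere, contradicting the presence of a positive-dimensional \emph{flat} singular locus with nontrivial cone structure. Equivalently, one wants: ``$M(T_1,\ldots,T_m)$ is never a hyperbolic pseudomanifold'' together with ``equality in Theorem~\ref{t:volumedecreases} forces it to be one.'' Making the second implication precise for these pseudomanifolds --- in particular controlling the simplices that enter the flat filling region and ruling out degenerate extremal cycles --- is the main obstacle, and is presumably the reason the statement appears here only as a conjecture.
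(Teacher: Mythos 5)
This statement is labeled a \emph{conjecture} in the paper, and the paper offers no proof of it; the authors explicitly flag it as the higher--dimensional analogue of Thurston's strict--decrease theorem and leave it open. So there is no ``paper's own proof'' against which to compare your attempt: anything you write here has to stand on its own, and your text is a research plan rather than a proof. To your credit, you recognize this for part (ii). But the plan has a substantive gap in part (i) as well, and I want to make that explicit so the overall status is clear.

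For (i), the crux is the claimed estimate $\|M(T_1^{t_0},\ldots,T_m^{t_0})\| \ge \Vol(\overline M(t_0))/v_n$. Nothing in the paper yields this. The paper's lower bound, Corollary \ref{c:lowerbound}, comes from Mineyev--Yaman via surjectivity of $H^n_b \to H^n$ and gives positivity with no quantitative control on $\|\alpha\|_\infty$; indeed Remark \ref{yamaguchiapp} explicitly says that the only quantitative lower bound the authors know is ``bounded away from zero.'' Your proposed route --- pair the fundamental class of the filling with a cocycle built from the hyperbolic volume form on the isometrically embedded thick part --- requires \emph{extending} that cochain to a genuine bounded $n$--cocycle on the whole pseudomanifold, including the CAT$(0)$ filling region where there is no hyperbolic metric and no natural straightening. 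That extension, done with the right normalization and bounded norm so that the duality \eqref{duality} returns $\Vol(\overline M(t_0))/v_n$, is precisely the hard step, not bookkeeping. Without it, you have ``$\le$'' from Theorem \ref{t:volumedecreases} and a qualitative ``$>0$'' from \cite{MY}, which does not squeeze.

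For (ii), you correctly identify the obstruction: there is no hyperbolic metric on the filling when $n\ge 4$, so Thurston's rigidity argument for strict volume decrease in dimension $3$ has no direct analogue, and a ``nearly--minimal cycles force constant curvature'' argument would need a rigidity theory for locally CAT$(-1)$ (or CAT$(0)$ with isolated flats) pseudomanifolds that does not currently exist. This is an honest sketch of a research program, not a proof. In sum, neither half of the conjecture is established by your proposal, and the paper itself leaves both halves open --- which is consistent with your own closing remark.
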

\begin{question}
   Let
  $\epsilon>0$.  Is there some $R$ so that if every torus $T_i$
   has injectivity radius bigger than $R$, then
\[ \frac{\Vol(V)}{v_n}-\epsilon<\|M(T_1,\ldots,T_m)\|<\frac{\Vol(V)}{v_n}? \]
\end{question}
\begin{question}
  Consider the set of fundamental groups of $2\pi$--fillings of $M$ as a
  subspace of the space of ``marked groups'' generated by some fixed
  set of generators of $\pi_1M$.  Is the simplicial volume a
  continuous function on this space?
\end{question}
\begin{remark}\label{yamaguchiapp}
  It is possible to analyze the metrics constructed in \cite{FM1} and
apply Yamaguchi's inequality \eqref{yamaguchilower} to deduce that
the CAT$(-1)$ fillings of a fixed $M$ have simplicial volume bounded
away from zero, but this is about all we know for certain.
\end{remark}

\subsection{Outline}
In Subsection \ref{ss:norms} we recall the definition of simplicial volume
for manifolds and pseudomanifolds, and apply work of Mineyev--Yaman to
obtain the lower bound in the main theorem.  Subsection \ref{ss:symm}
and Appendix \ref{a:symm} are technical sections concerned with replacing
singular chains by ``symmetric'' singular chains.  The upper bound of
the main theorem is proved in Section \ref{s:main}.  In Appendix
\ref{s:prop} we give a proof of the proportionality of volume and
relative simplicial volume for cusped hyperbolic manifolds.

\section{Simplicial volume and duality}\label{s:defs}
In this section we recall the definition of
\emph{simplicial volume} for pseudomanifolds and set up some
computational tools. 

\subsection{Gromov norms}\label{ss:norms}
Let $X$ be any space, and let $\omega \in H_*(X;\bR)$ or $\omega \in
H_*(X,Y;\bR)$ for some $Y\subset X$.
The \emph{Gromov norm of $\omega$} is the smallest $l^1$--norm of a
real singular chain representing $\omega$,
\[ \|\omega\| = \inf\left\{\sum_{i=1}^k
|\alpha_i|\ \right|\ \left. \left[\sum_{i=1}^k\alpha_i\sigma_i \right]
 = \omega \right\}.\]
(Since this infimum may be zero, the ``Gromov norm'' is only a
 seminorm in general.)
This quantity only depends on $\omega$ and the homotopy type of the
pair $(X,Y)$.  In the absolute case,
we can use it to define the Gromov norm of any $\omega\in H_*(G)$,
where $G$ is a discrete group, by choosing $X$ to be a $K(G,1)$.

If $\omega$ is an integral cycle and $\eta$ is the canonical map from
integral to real homology, then we set
$\|\omega\| = \|\eta(\omega)\|$.   

The Gromov norm of a cycle
can be computed via the pairing between bounded
cohomology and $l^1$ homology:
If $\omega\in H_k(X)$ and $H_b^k(X;\bR)$ is the
$k$--dimensional bounded
cohomology of $X$, then
\begin{equation}\label{duality}
 \|\omega\| = \sup\left(\left.\left\{\frac{1}{\|\phi\|_\infty}\ \right| \phi\in
H_b^k(X;\bR),\langle \phi,\omega\rangle = 1\right\}\cup\{0\}\right).
\end{equation}
See \cite[Proposition F.2.2]{BP92} for a proof.  

For $M$ a closed orientable $n$--manifold,
Gromov defined the \emph{simplicial volume} $\|M\|$ to be the Gromov
norm of the fundamental class in $H_n(M;\bR)$.
For a
manifold with boundary, it is natural to define 
$\|M,\partial M\|$ to be
the Gromov norm of the fundamental class in $H_n(M,\partial M;\bR)$.  
It is clear from the definition that this volume only depends on the
homotopy type of $(M,\partial M)$.  For closed hyperbolic manifolds
the simplicial volume is
proportional to the volume.  More generally, if $M$ is compact, and the interior
of $M$ admits a complete finite volume hyperbolic metric of
volume $\Vol$, then
\[ \|M,\partial M\| = \frac{1}{v_n} \Vol,\]
where $v_n$ is the volume of a hyperbolic regular ideal simplex of
dimension $n$. (See \cite[C.4]{BP92} for the closed case; we show how
to extend to the finite volume case in Appendix \ref{s:prop}.)

In \cite{Ya97}, Yamaguchi extended the notion of simplicial volume to
orientable pseudomanifolds, defined as follows.
\begin{definition}\cite[Definition 2.1]{Ya97}\label{d:pseudomanifold}
  A locally compact, Hausdorff, locally homologically $n$--connected
  space $X$ is called an   \emph{oriented $n$--pseudomanifold} if it
  contains a closed subset $S$ (the \emph{singular set}) so that:
  \begin{enumerate}
    \renewcommand{\labelenumi}{(\roman{enumi})}
  \item $X\smallsetminus S$ is an orientable $n$--manifold,
  \item $\dim(S)\leq n-1$, and
  \item $\dim(S\cap\overline{X\smallsetminus S}) \leq n-2$.
  \end{enumerate}
\end{definition}
\begin{remark}
In this definition, we allow the possibility that $S$ is not ``really''
singular.  For example, $X$ could be a manifold and $S$ a
codimension $2$ submanifold.  This is important in applying
Proposition \ref{p:nonzero} to $2\pi$--fillings which happen to be
manifolds, as in Corollary \ref{c:specialcase} below.
\end{remark}
In \cite[Proposition 2.2]{Ya97}, Yamaguchi shows that an oriented $n$--pseu\-do\-man\-i\-fold $X$
has a fundamental class $[X]$ in $H_n(X) = H_n(\pi_1(X))$ so that the
orientation cocycle $\beta \in H^n(X)$ is dual to this class.
For such a pseudomanifold $X$, the \emph{simplicial volume} of $X$ is
defined to be the Gromov norm of $[X]$, and written $\|X\|$.

If $X$ is compact with CAT$(-1)$ universal cover, then $X$ is aspherical and
$\pi_1(X)$ is (word) hyperbolic \cite{gromov:hg}.  In this case
the
orientation cocycle is cohomologous to a bounded cocycle by the main
theorem of \cite{min:bcchg}, and so $\|X\|>0$ by the duality equation
\eqref{duality}.  

If $X$ is compact and the universal cover is CAT$(0)$ with isolated
flats, then $X$ is again aspherical, but $\pi_1(X)$ is only relatively
hyperbolic, relative to fundamental groups of images of flats in $X$ \cite{HK}. 
In this case, we will need the
following corollary of a theorem of Mineyev--Yaman:
\begin{proposition}\label{p:nonzero}
  Let $M$ be a closed orientable aspherical
  $n$--dimensional pseudomanifold with
  singular set $V\subset M$ which is a disjoint union of aspherical components $V
  = \cup_{i=1}^mV_i$.  Suppose that $\pi_1(M)$ is hyperbolic relative
  to the collection of subgroups
  $\left\{\pi_1(V_i)\mid i\in \{1,\ldots,n\}\right\}$.  Then $\|M\|>0$.
\end{proposition}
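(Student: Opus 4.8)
The plan is to deduce positivity of $\|M\|$ from the duality formula \eqref{duality} together with the bounded cohomology of relatively hyperbolic groups, fed in through the long exact sequence of the pair $(M,V)$.

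First I would reduce the statement to a bounded cohomology claim. Let $\beta\in H^n(M;\bR)$ be the orientation class, so that $\langle\beta,[M]\rangle=1$. The pairing of bounded cohomology with singular homology factors through the comparison map $c\co H_b^n(M;\bR)\to H^n(M;\bR)$, so if we can find $\phi\in H_b^n(M;\bR)$ with $c(\phi)=\beta$, then $\langle\phi,[M]\rangle=\langle\beta,[M]\rangle=1$, and \eqref{duality} gives $\|M\|\geq 1/\|\phi\|_\infty>0$. Thus it suffices to show that the orientation class lies in the image of the comparison map. (Note Mineyev's theorem \cite{min:bcchg} does not apply directly here, since for relatively hyperbolic groups the \emph{absolute} comparison map need not be surjective; one really must work with the pair.)

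Next I would pass to the pair $(M,V)$. Write $G=\pi_1(M)$ and $H_i=\pi_1(V_i)$, a subgroup of $G$ by hypothesis. The long exact sequences of the pair in ordinary and in bounded cohomology are joined by the natural comparison maps, so in particular the square with corners $H_b^n(M,V;\bR)$, $H_b^n(M;\bR)$, $H^n(M,V;\bR)$, $H^n(M;\bR)$ commutes. Since $\dim V\leq n-1$ we have $H^n(V;\bR)=0$, so $H^n(M,V;\bR)\to H^n(M;\bR)$ is onto; pick $\bar\beta\in H^n(M,V;\bR)$ mapping to $\beta$. Because $M$ and each $V_i$ are aspherical and each $H_i$ injects into $G$, the pair $(M,V)$ is a model for the group pair $(G,\{H_i\})$ (that is, $M\simeq K(G,1)$ and $V_i\simeq K(H_i,1)$, with the inclusions $V_i\hookrightarrow M$ realizing $H_i\hookrightarrow G$), so there are natural isomorphisms $H^*(M,V;\bR)\cong H^*(G,\{H_i\};\bR)$ and $H^*_b(M,V;\bR)\cong H^*_b(G,\{H_i\};\bR)$, compatible with the comparison maps.

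Finally I would invoke the theorem of Mineyev and Yaman: since $G$ is hyperbolic relative to $\{H_i\}$, the comparison map $H_b^n(G,\{H_i\};\bR)\to H^n(G,\{H_i\};\bR)$ is surjective for $n\geq 2$, which covers all cases arising here. Transporting this back to $(M,V)$, I lift $\bar\beta$ to some $\bar\phi\in H_b^n(M,V;\bR)$ and let $\phi\in H_b^n(M;\bR)$ be its image under $H_b^n(M,V;\bR)\to H_b^n(M;\bR)$. Commutativity of the square forces $c(\phi)=\beta$, which is what the first step demanded, and hence $\|M\|>0$. The step needing the most care is the identification of the relative (bounded) cohomology of the topological pair $(M,V)$ with that of the group pair $(G,\{H_i\})$, and checking that Mineyev--Yaman's hypotheses hold; for the fillings that concern us this is comfortable, since the $V_i$ are flat tori, so the peripheral subgroups are finitely generated free abelian, and the asphericity and $\pi_1$--injectivity are built into the construction of \cite{FM1}.
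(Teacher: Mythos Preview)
Your proof is correct and follows essentially the same route as the paper: pass to the pair $(M,V)$, identify its (bounded) cohomology with that of the group pair $(G,\{H_i\})$ via asphericity, apply Mineyev--Yaman to get surjectivity of the relative comparison map, and then use the duality formula \eqref{duality}. The only cosmetic difference is that you obtain surjectivity of $H^n(M,V;\bR)\to H^n(M;\bR)$ from the long exact sequence of the pair and $\dim V\le n-1$, whereas the paper cites a Mayer--Vietoris argument using $\dim(V\cap\overline{M\smallsetminus V})\le n-2$ to get an isomorphism; only surjectivity is actually used, so your version is if anything slightly cleaner.
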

\begin{proof}
  The inclusion of pairs $(M,\emptyset)\to (M,V)$ gives a natural map 
  \[ H^n(M,V;\bR)\to H^n(M;\bR). \]
  Since $\dim(V\cap \overline{M\smallsetminus V})\leq n-2$, the
  Mayer--Vietoris exact sequence shows this map to be an isomorphism
  (cf. \cite[Proposition 2.2]{Ya97}).  The main result of
  Mineyev--Yaman \cite{MY} implies that the map 
  \[ H_b^n(\pi_1(M),\{\pi_1(V_i)\}_{i=1}^m;\bR) \to
    H^n(\pi_1(M),\{\pi_1(V_i)\}_{i=1}^m;\bR)\]
  is surjective, since $\pi_1(M)$ is hyperbolic relative to the
  fundamental groups of the components of $V$.  Since $M$ and the
  components of $V$ are aspherical, 
  this is the same as saying that the map
  \begin{equation}\label{allbounded}
    H^n_b(M,V;\bR) \to H^n(M,V;\bR) = H^n(M;\bR)
  \end{equation}
  is surjective.   It follows that the
  orientation cocycle of $M$ is cohomologous to a bounded cocycle $\alpha$.
  (The cocycle moreover can be chosen to vanish on chains in $V$, but we don't use
  this.)  Equation \eqref{duality} gives
  $ \|M\| \geq \frac{1}{\| \alpha\|_\infty} > 0 .$
\end{proof}
Applying Theorem \ref{t:filling},
we immediately obtain the lower bound of Theorem \ref{t:volumedecreases}.
\begin{corollary}\label{c:lowerbound}
  If $M(T_1,\ldots,T_m)$ is a $2\pi$--filling as in Theorem
  \ref{t:volumedecreases}, then 
  \[\|M(T_1,\ldots,T_m)\|>0.\]
\end{corollary}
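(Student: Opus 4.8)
The plan is to deduce the corollary directly from Proposition~\ref{p:nonzero}, so the real work is in checking its hypotheses for $X := M(T_1,\ldots,T_m)$. First I would feed $X$ through Theorem~\ref{t:filling} to obtain a locally CAT$(0)$ metric whose universal cover $\widetilde X$ is CAT$(0)$ with isolated flats, and which is locally CAT$(-1)$ when every $T_i$ has dimension at least $n-2$. In particular $\widetilde X$ is contractible, so $X$ is aspherical, and from the description following Definition~\ref{d:filling} one reads off that $X$ is a closed orientable $n$--pseudomanifold whose singular set is the disjoint union of the leaf-space tori $V_i = N_i/T_i$, with $V_i$ a torus of dimension $n-1-\dim T_i$; each $V_i$ is aspherical. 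When $\widetilde X$ is CAT$(-1)$ the statement is already contained in the discussion preceding Proposition~\ref{p:nonzero}: $\pi_1 X$ is hyperbolic, the orientation cocycle is cohomologous to a bounded one by \cite{min:bcchg}, and \eqref{duality} gives $\|X\|>0$. So the remaining task is to show, in the isolated-flats case, that $\pi_1 X$ is hyperbolic relative to $\{\pi_1(V_i)\}_{i=1}^m$; Proposition~\ref{p:nonzero} then applies verbatim.

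For the relative hyperbolicity I would invoke Hruska--Kapovich \cite{HK}: a group acting geometrically on a CAT$(0)$ space with isolated flats is hyperbolic relative to the fundamental groups of the images of the maximal flats. It then remains to identify those peripheral subgroups with (conjugates of) the $\pi_1(V_i)$. The input here is the explicit geometry of the filling metric of \cite{FM1}: a neighborhood of $V_i$ in $X$ is topologically $\mathrm{Cone}(T_i)\times V_i$, and the metric is a product in the $V_i$--direction, so a lift of $V_i$ is a totally geodesic flat $\bR^{\dim V_i}$ in $\widetilde X$ with stabilizer exactly $\pi_1(V_i)$, while the isolated-flats condition forces every maximal flat of $\widetilde X$ to be a translate of one of these. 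Those $V_i$ that are single points contribute trivial peripheral subgroups, which may be discarded without affecting relative hyperbolicity (indeed then $\pi_1 X$ may be honestly hyperbolic). This supplies the hypothesis of Proposition~\ref{p:nonzero}, whence $\|X\|>0$.

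The main obstacle is exactly this identification of the abstract peripheral structure produced by \cite{HK} with the geometrically defined singular tori: one needs that the image of each maximal flat is precisely some $V_i$, that its fundamental group is all of $\pi_1(V_i)$ rather than a proper subgroup or an overgroup, and a little care is needed for $V_i$ of dimension $1$, which are lines rather than flats and thus call for a separate verification that the corresponding $\bZ$ is quasiconvex and almost malnormal. Since all of this is really a statement about the specific metrics of \cite{FM1}, the cleanest path is to quote whichever lemma of \cite{FM1} records the flat (equivalently, the relatively hyperbolic) structure of a $2\pi$--filling, rather than to reconstruct it here; alternatively one can bypass the geometry entirely and apply the relatively hyperbolic Dehn filling theorem to the cusped group $\pi_1 V$, which exhibits $\pi_1 X$ as a quotient that is hyperbolic relative to the images of the cusp subgroups, namely the $\pi_1(V_i)$.
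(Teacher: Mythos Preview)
Your approach is essentially the same as the paper's: apply Theorem~\ref{t:filling} to put a locally CAT$(0)$ metric with isolated flats on $X=M(T_1,\ldots,T_m)$, invoke \cite{HK} to see that $\pi_1 X$ is relatively hyperbolic with peripheral subgroups the $\pi_1(V_i)$, and then feed this into Proposition~\ref{p:nonzero}. The paper's own proof is a single sentence (``Applying Theorem~\ref{t:filling}, we immediately obtain the lower bound''), so you have in fact been considerably more careful than the paper about the one genuine point, namely matching the abstract peripheral structure coming from \cite{HK} with the concrete subgroups $\pi_1(V_i)$ of the singular tori.

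Two small remarks. First, the reference \cite{HK} is Hruska--Kleiner, not Hruska--Kapovich. Second, your worry about the identification of flats is well placed but need not detain you: as you note yourself, the cleanest route is simply to cite the relevant structural result from \cite{FM1} (or the relatively hyperbolic Dehn filling theorem applied to $\pi_1 V$) rather than to redo the geometric analysis; the paper evidently regards this identification as part of the package delivered by Theorem~\ref{t:filling} and the surrounding discussion in \cite{FM1}.
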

The special case of nonpositively curved
\emph{manifolds} obtained by $2\pi$--filling of $\geq 4$--dimensional
hyperbolic manifolds has been studied before us
by Schroeder, Anderson and others
\cite{Sc,An}.  
Ivan\v{s}i\'{c} describes in \cite{Iv} a particular cusped hyperbolic
manifold which is the complement of five linked $2$--tori in the $4$--sphere.  Ratcliffe
and Tschantz point out in \cite{RT05} that appropriately chosen
$2\pi$--fillings of this hyperbolic manifold are homology
$4$--spheres.  
In
\cite[Question 7.2]{Be06}, Belegradek asked if such fillings have
positive simplicial volume.  
As a
special case of \ref{c:lowerbound} above, we give an affirmative answer.
\begin{corollary}\label{c:specialcase}
  Every manifold obtained by
  $2\pi$--filling of an orientable hyperbolic $n$--manifold with torus cusps
  has positive simplicial volume.  In particular,
  the aspherical homology $4$--spheres constructed by Ratcliffe and
  Tschantz have positive simplicial volume.  
\end{corollary}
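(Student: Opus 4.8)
The plan is to reduce the first sentence to Corollary~\ref{c:lowerbound} (equivalently, to Proposition~\ref{p:nonzero}), and then to check that the Ratcliffe--Tschantz examples are covered by it. The one point that needs care is that a $2\pi$--filling which happens to be a smooth manifold must nonetheless be viewed as a pseudomanifold, carrying a (possibly non-genuine) singular set, so that Proposition~\ref{p:nonzero} can be applied with the correct peripheral structure; this is exactly what the Remark after Definition~\ref{d:pseudomanifold} anticipates.

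First I would record the pseudomanifold structure. Let $X = M(T_1,\ldots,T_m)$ be a $2\pi$--filling of an orientable hyperbolic $n$--manifold $M$ with torus cusps. As in the discussion after Definition~\ref{d:filling}, $X$ contains the tori $V_1,\ldots,V_m$ obtained as the images in $X$ of the boundary tori $N_1,\ldots,N_m$ of $\overline M$, each of codimension at least $2$, and $X$ is a closed oriented $n$--pseudomanifold in the sense of Definition~\ref{d:pseudomanifold} with singular set $V=\bigsqcup_i V_i$. If $X$ happens to be a manifold --- which forces each $T_i$ to be a circle, so that each filling piece is a copy of $D^2\times T^{n-2}$ glued to a boundary component and each $V_i$ is an embedded $(n-2)$--torus --- then $V$ is not a true singular set, but Definition~\ref{d:pseudomanifold} explicitly permits this, and Yamaguchi's pseudomanifold fundamental class of $X$ is then just its ordinary manifold fundamental class; in particular the two notions of $\|X\|$ coincide.

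Next I would verify the hypotheses of Proposition~\ref{p:nonzero}. By Theorem~\ref{t:filling}, $X$ carries a locally CAT$(0)$ metric whose universal cover has isolated flats, so $X$ is aspherical and, by \cite{HK}, $\pi_1(X)$ is hyperbolic relative to the stabilizers of its maximal flats. Since the hyperbolic part of $X$ supports no flat of dimension $\ge 2$, up to conjugacy every maximal flat is one carried by a filling piece, and its stabilizer is $\pi_1(V_i)$ for the corresponding $i$; this identification of the peripheral structure is where one invokes the description of the fillings in \cite{FM1}. Thus $\pi_1(X)$ is hyperbolic relative to $\{\pi_1(V_i)\}_{i=1}^m$, the $V_i$ are tori and hence aspherical, and every hypothesis of Proposition~\ref{p:nonzero} is in force; it follows that $\|X\|>0$. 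Together with the previous paragraph this proves the first assertion --- which, once the pseudomanifold set-up is recorded, is literally the content of Corollary~\ref{c:lowerbound}.

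Finally, for the homology spheres I would check that Ivan\v{s}i\'{c}'s manifold \cite{Iv}, the complement of five linked $2$--tori in $S^4$, is an orientable finite-volume hyperbolic $4$--manifold with torus cusps, and that the fillings of it selected in \cite{RT05} are $2\pi$--fillings (their nonpositive curvature being a consequence of the $2\pi$--condition via Theorem~\ref{t:filling}) which are closed $4$--manifolds with the homology of $S^4$; being nonpositively curved they are aspherical, so their fundamental groups are infinite and there is no tension with $\|S^4\|=0$. The previous two paragraphs then give positivity of their simplicial volume, answering \cite[Question~7.2]{Be06}. I expect the only genuinely geometric step to be the identification of the peripheral structure of $\pi_1(X)$ with $\{\pi_1(V_i)\}$ in the third paragraph --- i.e., that the maximal flats of the filling are exactly the collapsed cusp tori, with the expected stabilizers --- which one reads off from the structure of $2\pi$--fillings in \cite{FM1}; everything else is bookkeeping.
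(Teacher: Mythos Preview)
Your proposal is correct and follows essentially the same route as the paper: the corollary is presented there as an immediate special case of Corollary~\ref{c:lowerbound}, which in turn combines Theorem~\ref{t:filling} with Proposition~\ref{p:nonzero}, and the Ratcliffe--Tschantz examples are identified as $2\pi$--fillings of Ivan\v{s}i\'{c}'s manifold. You are simply more explicit than the paper about the one step it glosses over, namely identifying the peripheral subgroups coming from the isolated-flats structure with the $\pi_1(V_i)$ needed in Proposition~\ref{p:nonzero}.
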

Theorem \ref{t:volumedecreases} gives further information (an upper
bound) on the simplicial volume. 

\subsection{Symmetrization}\label{ss:symm}
It will be convenient in the arguments of Section \ref{s:main} to
assume that all chains considered are \emph{symmetric}.  Here we say
what this means and justify the assumption.

Consider the abstract $n$--simplex $\Delta^n$.  The symmetric group
$S_{n+1}$ acts in an obvious way on the vertices, and any permutation
$p\in S_{n+1}$ extends to $\Delta^n$ as an affine transformation.

Let $X$ be some space.
Any singular simplex $\sigma\co \Delta^n\to X$ can be altered by
precomposition with some $p\in S_{n+1}$ and the singular simplices
$\sigma$ and $\sigma\circ p$ are generally different, though if $p$ is
an odd permutation and $q$ is an even one
(so $\mathrm{sgn}(p)=-1$ and $\mathrm{sgn}(q)=1$), we'd really like to
think of $\sigma\circ p$ as being the same as $\sigma \circ q$ or $\sigma$, just
with opposite orientation.  We would also like to avoid the annoying
bookkeeping which is standard when dealing with simplices.
We therefore define the \emph{symmetrization} map
\[ S\co C_*(X;\bR)\to C_*(X;\bR) \]
on singular $n$-simplices $\sigma$ by
\begin{equation}\label{symmsum}
S(\sigma) = \frac{1}{(n+1)!}
   \sum_{p\in S_{n+1}}\mathrm{sgn}(p)\left(\sigma\circ p\right),
\end{equation}
and extend linearly over $C_*(X;\bR)$.  
\begin{lemma}\label{l:chhom}
  $S$ is a chain map, chain homotopic
  to the identity.  Moreover
  $\|S(c)\|_1\leq\|c\|_1$ for any chain $c$.
\end{lemma}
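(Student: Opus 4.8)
The plan is to verify the three claims about the symmetrization map $S$ in order: that it is a chain map, that it is chain homotopic to the identity, and that it does not increase the $\ell^1$-norm.

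First I would check that $S$ is a chain map, i.e.\ that $\partial \circ S = S \circ \partial$ on singular $n$-simplices. The essential point is that the boundary operator is compatible with the $S_{n+1}$-action in the appropriate signed sense: for a permutation $p \in S_{n+1}$ and a singular $n$-simplex $\sigma$, the faces of $\sigma \circ p$ are, up to sign, the faces of $\sigma$ precomposed with the induced permutations in $S_n$ on the various $(n-1)$-dimensional faces. Concretely, one shows $\partial(\sigma \circ p) = \sum_j \mathrm{sgn}(p)\,\mathrm{sgn}(\text{something})\dots$; more cleanly, one notes that the alternating sum $\frac{1}{(n+1)!}\sum_p \mathrm{sgn}(p)(\sigma\circ p)$ already ``knows'' about all reorderings, so applying $\partial$ and reindexing the sum over $S_{n+1}$ by the subgroup fixing a vertex recovers $S(\partial\sigma)$. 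I would present this as a short reindexing computation, being careful with the sign bookkeeping that the $\mathrm{sgn}$ factor is exactly designed to absorb. This is somewhat routine but is the natural first step.

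Next, for the chain homotopy to the identity, the cleanest approach is to invoke the standard fact that any two chain maps $C_*(X;\bR)\to C_*(X;\bR)$ that are natural in $X$ and agree in degree $0$ (both send a point to that point, since $S_1$ is trivial so $S$ is the identity on $0$-simplices) are chain homotopic, by the method of acyclic models: singular chains form a free chain complex with models the simplices $\Delta^n$, and these models are acyclic. Both $S$ and $\mathrm{Id}$ are natural transformations of this functor, so acyclic models gives a natural chain homotopy $H$ with $\partial H + H \partial = S - \mathrm{Id}$. Alternatively, if one prefers an explicit homotopy, one can build $H$ simplex-by-simplex over the acyclic models, but I would just cite the acyclic models theorem (as in any standard algebraic topology reference).

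Finally, the norm inequality $\|S(c)\|_1 \le \|c\|_1$ follows from the triangle inequality: writing $c = \sum_i \alpha_i \sigma_i$, we have $S(c) = \sum_i \alpha_i \frac{1}{(n+1)!}\sum_{p}\mathrm{sgn}(p)(\sigma_i\circ p)$, so $\|S(c)\|_1 \le \sum_i |\alpha_i| \cdot \frac{1}{(n+1)!}\sum_p |{\mathrm{sgn}(p)}| = \sum_i |\alpha_i| = \|c\|_1$ — note that even after collecting terms, cancellation can only decrease the norm. The main obstacle, such as it is, is keeping the sign and reindexing bookkeeping straight in the chain map verification; the other two parts are essentially formal.
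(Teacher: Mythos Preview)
Your proposal is correct. The norm inequality and the (implicit) chain-map verification are handled the same way as in the paper. For the chain homotopy you invoke the acyclic models theorem abstractly, whereas the paper (in Appendix~\ref{a:symm}) carries out that construction by hand: it writes down a coning operator $c_n$ on $C_*(\Delta^n;\bR)$ witnessing the acyclicity of the model, then inductively builds an explicit element $\mu_n\in C_{n+1}(\Delta^n;\bR)$ and sets $P_n(\phi)=\phi_\sharp(\mu_n)$, checking directly that $\partial P_n + P_{n-1}\partial = S-\Id$. So the two arguments are the same in spirit---your citation of acyclic models is exactly what the paper's explicit inductive construction is implementing---but the paper's version has the minor advantage of producing a concrete formula for the homotopy (and of being self-contained), while yours is shorter and makes clear why the result is formal once naturality of $S$ is observed.
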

\begin{proof}
  It is evident that norm does not increase.  The chain homotopy is
  described in Appendix \ref{a:symm}.  
\end{proof}
\newcommand{\symch}{C_*^S}
Define 
$\symch(X;R)$ to be the subcomplex of $C_*(X;R)$ consisting of chains
in the image of $S$;  the transformation $S$ restricts to the identity
on this subcomplex.
Lemma \ref{l:chhom} implies that in computing Gromov norms, we may as
well restrict attention to $\symch(X;R)$.

Suppose $X$ has the structure of a 
simplicial complex (or more generally just a CW complex so that each
cell is a simplex, and so that each restriction of a gluing map to a face is affine)
with triangulation $\mc{T}$.
\begin{definition}
We call a singular simplex $\sigma\co\Delta^n \to X$ \emph{affine} if
$\sigma = \tau\circ a$ where
$a\co \Delta^n\to \Delta^k$ is some affine surjection taking vertices
to vertices, and
$\tau$ is 
the characteristic map of some $k$--simplex of $\mathcal{T}$.  A
linear combination of affine simplices is an \emph{affine chain}, and
so on.
\end{definition}

\begin{remark}\label{r:inclusion}
There is a nice inclusion 
\newcommand{\simpch}{\mc{C}^{\mc{T}}_*}
\[ \simpch(X;\bR) \stackrel{i}{\longrightarrow} \symch(X;\bR) \]
of simplicial chains $\simpch(X;\bR)$ given by sending a simplex 
of $X$ to the
symmetrization of its characteristic map.  
This gives a way to think of simplicial chains as
symmetric affine singular chains.  
\end{remark}

In fact, the image of the map $i$ from Remark \ref{r:inclusion}
contains \emph{all} of the symmetric affine simplices;  the symmetrization of
an affine simplex which drops dimension must vanish:
\begin{lemma}\label{l:vanish}
  Let $X$ have a triangulation $\mathcal{T}$, and suppose that
  $\sigma$ is an affine singular $n$--simplex with image in the
  $(n-1)$--skeleton of $X$.  Then $S(\sigma) = 0$.
\end{lemma}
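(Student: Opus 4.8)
The plan is to pair the $(n+1)!$ summands of the symmetrization \eqref{symmsum} of $\sigma$ so that each pair cancels. The pairing will be left multiplication by a single well-chosen transposition $p\in S_{n+1}$, namely one for which precomposition by $p$ leaves $\sigma$ unchanged.

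First I would unwind the hypothesis. Write $\sigma=\tau\circ a$, where $\tau$ is the characteristic map of some $k$--simplex of $\mathcal{T}$ and $a\co\Delta^n\to\Delta^k$ is an affine surjection taking vertices to vertices. The image of $\sigma$ is the closed $k$--cell $\tau(\Delta^k)$, and the open $k$--cell $\tau(\mathrm{int}\,\Delta^k)$ is disjoint from the $(n-1)$--skeleton whenever $k\ge n$; so the hypothesis that $\sigma$ lands in the $(n-1)$--skeleton forces $k\le n-1$. Since $a$ is surjective and affine, it carries the vertex set of $\Delta^n$ onto the vertex set of $\Delta^k$, i.e.\ $n+1$ vertices onto $k+1\le n$ vertices, so by the pigeonhole principle there are two distinct vertices $i\ne j$ of $\Delta^n$ with $a(i)=a(j)$.

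Next I would set $p=(i\,j)\in S_{n+1}$ and write $\hat p$ for its affine extension to $\Delta^n$ (so that, re-indexing, \eqref{symmsum} reads $S(\sigma)=\frac{1}{(n+1)!}\sum_{q\in S_{n+1}}\mathrm{sgn}(q)\,(\sigma\circ\hat q)$). I claim $\sigma\circ\hat p=\sigma$: the map $a\circ\hat p$ is affine and agrees with $a$ on every vertex of $\Delta^n$ — it sends the $i$--th vertex to $a(j)=a(i)$, the $j$--th vertex to $a(i)=a(j)$, and fixes every other vertex — hence $a\circ\hat p=a$, and therefore $\sigma\circ\hat p=\tau\circ a\circ\hat p=\tau\circ a=\sigma$.

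Finally I would collapse the sum. Since $q\mapsto\hat q$ is a homomorphism from $S_{n+1}$ to the group of affine automorphisms of $\Delta^n$, we have $\sigma\circ\widehat{pq}=\sigma\circ\hat p\circ\hat q=\sigma\circ\hat q$ for every $q\in S_{n+1}$, while $\mathrm{sgn}(pq)=-\mathrm{sgn}(q)$. Partitioning $S_{n+1}$ into the $(n+1)!/2$ pairs $\{q,\,pq\}$, the two summands of \eqref{symmsum} indexed by such a pair are $\mathrm{sgn}(q)\,(\sigma\circ\hat q)$ and $-\mathrm{sgn}(q)\,(\sigma\circ\hat q)$, which cancel; hence $S(\sigma)=0$. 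I do not expect a real obstacle here — the one point that needs care is the bookkeeping that $q\mapsto\hat q$ is a homomorphism (not an anti-homomorphism) for the composition convention used in \eqref{symmsum}, which is exactly what makes the pairing $\{q,pq\}$ compatible with the cancellation.
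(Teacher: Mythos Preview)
Your proof is correct and follows essentially the same approach as the paper's: find a transposition $p$ with $\sigma\circ p=\sigma$, then pair each permutation $q$ with $pq$ so that the corresponding terms in \eqref{symmsum} cancel. Your version is simply more explicit about why such a transposition exists (unwinding the definition of affine simplex and applying pigeonhole) and about the homomorphism bookkeeping, but the argument is the same.
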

\begin{proof}
  Since $\sigma$ has image in the $(n-1)$--skeleton, there must be a
  pair of vertices $\{v_i,v_j\}$ of the standard simplex $\Delta^n$
  which are identified by $\sigma$.  Let $p$ be a permutation of the
  vertices of $\Delta^n$, and let $\tau$ be the transposition switching
  $v_i$ and $v_j$.  The maps $\sigma\circ p$ and $\sigma\circ\tau\circ
  p$ are the same as singular simplices, but the permutations $p$ and
  $\tau\circ p$ have opposite signs, so the terms
  $\mathrm{sgn}(p)(\sigma\circ p)$ and $\mathrm{sgn(\tau\circ
    p)}(\sigma\circ\tau p)$ cancel in the sum \eqref{symmsum}.
\end{proof}

\section{Main result}\label{s:main}
In this section we prove the main result, Theorem
\ref{t:volumedecreases}.  The strategy is to modify an efficient relative
fundamental cycle for $(M,\partial M)$ to a fundamental cycle on the
filling, without increasing norm by too much.
\begin{lemma}\label{l:rational}
  Let $X$ be a topological space.
  Let $c\in {C_n}(X;\bR)$ be homologous to a rational
  chain, and let $\epsilon>0$.
  Then there is some $c'\in
  C_n(X;\bQ)$ homologous to $c$ so that $\|c'-c\|_1<\epsilon$.  If $c$
  is symmetric, $c'$ can be chosen to be symmetric.
\end{lemma}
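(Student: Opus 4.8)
The plan is to reduce the statement to a finite-dimensional density argument, approximating a \emph{bounding} chain rather than $c$ itself, so that the homology class is preserved automatically. Fix a rational chain $r\in C_n(X;\bQ)$ homologous to $c$ and a chain $b\in C_{n+1}(X;\bR)$ with $c-r=\partial b$. Let $T$ be the (finite) set of singular $(n+1)$--simplices occurring in $b$, and let $\Sigma$ be the (finite) set of singular $n$--simplices occurring in $c$, or in $r$, or as a face of some simplex of $T$. Let $U=\langle T\rangle_\bR\subset C_{n+1}(X;\bR)$ and $W=\langle\Sigma\rangle_\bR\subset C_n(X;\bR)$; these are finite-dimensional real vector spaces, and we write $U_\bQ\subset U$, $W_\bQ\subset W$ for the subsets of chains with rational coefficients. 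By construction $c,r\in W$ and $b\in U$, and $\partial$ restricts to a linear map $U\to W$ which, in the simplex bases, is represented by a matrix with entries in $\{-1,0,1\}$; in particular $\partial(U_\bQ)\subseteq W_\bQ$.

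First I would treat the case with no symmetry hypothesis. Given $\epsilon>0$, since $U_\bQ$ is dense in $U$ for the $\ell^1$--norm, choose $b'\in U_\bQ$ with $\|b'-b\|_1<\epsilon/(n+2)$ and set $c':=r+\partial b'\in C_n(X;\bQ)$. Then $c'$ is rational, it is homologous to $r$ and hence to $c$, and
\[ \|c'-c\|_1=\|\partial b'-\partial b\|_1=\|\partial(b'-b)\|_1\le (n+2)\|b'-b\|_1<\epsilon, \]
where we used that each $(n+1)$--simplex has $n+2$ faces, so $\partial$ has $\ell^1$--operator norm at most $n+2$. This proves the first assertion.

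For the symmetric refinement I would apply the symmetrization operator $S$ of Lemma \ref{l:chhom} to the chain $c'$ just produced, setting $c'':=S(c')$; this $c''$ has all the properties required of $c'$. Indeed, the coefficients $\tfrac{1}{(n+1)!}$ appearing in \eqref{symmsum} are rational, so $c''\in C_n(X;\bQ)$; the chain $c''$ is symmetric since it lies in the image of $S$; by Lemma \ref{l:chhom}, $S$ is chain homotopic to the identity, so $c''$ is homologous to $c'$ and hence to $c$; and since $c$ is symmetric it lies in the image of $S$, so $S(c)=c$, whence
\[ \|c''-c\|_1=\|S(c')-S(c)\|_1=\|S(c'-c)\|_1\le\|c'-c\|_1<\epsilon, \]
the last inequality again from Lemma \ref{l:chhom}.

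There is no serious obstacle here; the only points needing care are the two structural observations that make the perturbation safe: one approximates the $(n+1)$--chain $b$ rather than $c$ directly, so that $c'$ stays in the coset $r+\partial(U)$ and hence homologous to $c$; and the boundary operator is defined over $\bZ$, so a rational bounding chain has rational boundary. Everything else is finite-dimensional linear algebra together with the elementary properties of $S$ recorded in Section \ref{s:defs}.
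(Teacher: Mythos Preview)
Your proof is correct and follows essentially the same idea as the paper: approximate the $(n+1)$--chain bounding the difference between $c$ and a rational cycle by a rational chain, and use the operator-norm bound $\|\partial_{n+1}\|\le n+2$. For the symmetric refinement the paper instead takes $b$ and $b'$ symmetric from the outset (which one may do by applying $S$ to them), whereas you symmetrize $c'$ afterward; both work for the same reason, namely that $S$ is idempotent, norm-nonincreasing, preserves rationality, and is chain-homotopic to the identity.
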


\begin{proof}
  By assumption, there is some $f\in C_{n+1}(X;\bR)$ so that
  $c+\partial f \in C_n(X;\bQ)$.  Since $C_{n+1}(X;\bQ)$ is dense in
  $C_{n+1}(X;\bR)$, there is some $f'\in C_{n+1}(X;\bQ)$ with
  $\|f-f'\|<\frac{\epsilon}{n+2}$.  The chain $c'=c+\partial(f-f')$
  satisfies $\|c'-c\|<\epsilon$, since $\|\partial_{n+1}\|=n+2$.

  If $c$ is symmetric, we can take $f$ and $f'$ to be symmetric in the
  above argument, and it will follow that $c'$ is symmetric.
\end{proof}

\begin{definition}\label{d:small}
  Let $\mathcal{T}$ be a finite triangulation of a space $X$, and let
  $\mathcal{T}'$ be its barycentric subdivision.
  The \emph{(closed) star neighborhood} of a simplex $\tau$ of $\mc{T}$
  is the union of those simplices of the subdivision $\mathcal{T}'$
  intersecting $\tau$.  A point in the interior of an $n$--simplex of
  $X$ lies in at least one closed star neighborhood of a vertex, and
  in at most $n+1$ such closed stars.

  The \emph{open star neighborhood} of a simplex is
  the interior of its star neighborhood.  (So the open star
  neighborhood of a simplex does not include its proper faces.)  The
  open star neighborhoods of the simplices of $\mc{T}$ form an open
  cover of $X$.

  Let
  $\sigma\co \Delta^n\to X$ be a singular simplex.  We say $\sigma$ is
  \emph{small} with respect to $\mathcal{T}$
  if the image of $\sigma$ is contained entirely in the open star of
  some vertex of $\mathcal{T}'$.

  A singular chain will be called \emph{small} if all its simplices
  are small.
\end{definition}
Given an order on the vertices of $\mathcal{T}$ there is a canonical
way to homotope a small chain to an affine chain, as we describe in
the next lemma.

\begin{lemma}\label{l:affine}
  Let $X$ be a space with a (combinatorial) triangulation $\mathcal{T}$, and let
  $<_\mathcal{T}$ be a total order on the vertices of $\mathcal{T}$.
  Canonically associated to any small singular simplex $\sigma\co \Delta^n\to X$
  are
  \begin{enumerate}
  \item a ``straightened'' affine simplex $a(\sigma)$, and
  \item a homotopy $f_t(\sigma)$ so that $f_0(\sigma)=\sigma$ and
    $f_1(\sigma)=a(\sigma)$, 
  \end{enumerate}
  so that if $F$ is a face of $\Delta^n$, then 
  $a(\sigma|F)=a(\sigma)|F$ and $f_t(\sigma|F)=f_t(\sigma)|F$.  (In
  other words, homotopies on a face depend only on that face, so a
  cycle remains a cycle throughout the homotopy.)  
  Moreover, if
  $\sigma$ is invariant under precomposition with an affine map
  $\tau$, so are $a(\sigma)$ and  $f_t(\sigma)$.
\end{lemma}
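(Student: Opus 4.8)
The plan is to produce $a(\sigma)$ as a canonical \emph{simplicial approximation of $\sigma$ with a prescribed vertex assignment}, and to let $f_t(\sigma)$ be the resulting straight-line homotopy. The point of the hypothesis that $\sigma$ is small is precisely that its image lies in a single open star of $\mathcal{T}'$, which is exactly what forces the prescribed target vertices to span a simplex of $\mathcal{T}$ (so that $a(\sigma)$ really is an affine simplex in the sense of the paper) and what makes the straight-line homotopy well defined.

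Concretely, let $\mathcal{T}'$ be the barycentric subdivision, so each vertex of $\mathcal{T}'$ is the barycentre $\widehat\tau$ of a simplex $\tau$ of $\mathcal{T}$. For $p\in X$ write $\mathrm{carr}(p)\in\mathcal{T}$ for the carrier of $p$ (the simplex whose relative interior contains $p$) and $b_v(p)$ for the barycentric coordinates of $p$ in $\mathrm{carr}(p)$, with $b_v(p)=0$ for $v\notin\mathrm{carr}(p)$; if $\mathcal{T}$ is merely combinatorial, read all of this off characteristic maps. The one fact I would extract from smallness is: if the image of $\sigma$ lies in $\mathrm{ost}_{\mathcal{T}'}(\widehat{\tau_0})$, then for \emph{every} $x\in\Delta^n$ one has $\tau_0\subseteq\mathrm{carr}(\sigma(x))$, and every vertex maximizing $b_\bullet(\sigma(x))$ lies in $\tau_0$. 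This is immediate from the standard description of the $\mathcal{T}'$-carrier of $\sigma(x)$ via the super-level sets of $b_\bullet(\sigma(x))$: that $\widehat{\tau_0}$ is one of its vertices means $\tau_0$ equals one of those super-level sets, hence contains the smallest of them (the set of maximizers) and is contained in the largest of them, namely $\mathrm{carr}(\sigma(x))$.

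Now for each $i$ let $V_i=V_i(\sigma)$ be the $<_{\mathcal{T}}$-least vertex $v$ maximizing $b_v(\sigma(e_i))$; by the observation, $V_0,\dots,V_n\in\mathrm{Vert}(\tau_0)$, so they span a simplex of $\mathcal{T}$. Define $a(\sigma)\co\Delta^n\to X$ to be the affine singular simplex $\sum_i\lambda_i e_i\mapsto\sum_i\lambda_i V_i$, the convex combination computed inside any simplex of $\mathcal{T}$ containing all the $V_i$ (for instance $\tau_0$); the result is independent of that choice, so $a(\sigma)$ depends only on $\sigma$, and it is affine in the required sense. Since $V_i(\sigma|F)=V_i(\sigma)$ for every vertex $i$ of a face $F$ (each is computed from $\sigma(e_i)$ alone, and $\sigma|F$ is again small), the identity $a(\sigma|F)=a(\sigma)|F$ is automatic. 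Furthermore $a(\sigma)(x)$ is a convex combination of vertices of $\tau_0$, hence lies in $\tau_0\subseteq\mathrm{carr}(\sigma(x))$, so $\sigma(x)$ and $a(\sigma)(x)$ always lie together in the simplex $\mathrm{carr}(\sigma(x))$ of $\mathcal{T}$; this lets me set $f_t(\sigma)(x):=(1-t)\,\sigma(x)+t\,a(\sigma)(x)$, the affine combination computed inside $\mathrm{carr}(\sigma(x))$. It is well defined because the affine structures on simplices of $\mathcal{T}$ agree on common faces, it is jointly continuous in $(x,t)$ by the usual argument for straight-line homotopies, and $f_0(\sigma)=\sigma$, $f_1(\sigma)=a(\sigma)$. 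Face-consistency $f_t(\sigma|F)=f_t(\sigma)|F$ holds because $\sigma|F$, $a(\sigma|F)$ and $\mathrm{carr}((\sigma|F)(x))$ agree with the data for $\sigma$ along $F$, and the parenthetical claim that cycles stay cycles is the usual consequence of this (applying $f_t$ termwise commutes with $\partial$). Finally, if $\tau$ permutes the vertices of $\Delta^n$ and $\sigma\circ\tau=\sigma$, then $\sigma(e_{\tau(i)})=\sigma(e_i)$ gives $V_{\tau(i)}=V_i$, so the vertex assignment, hence $a(\sigma)$, hence $f_t(\sigma)$, is $\tau$-invariant; the same computation covers a general affine $\tau$ once one notes that $\sigma\circ\tau=\sigma$ makes $a(\sigma)\circ\tau$ and $a(\sigma)$ agree (they agree on vertices up to the fibres of $i\mapsto V_i$, which suffices since the $V_i$ need not be affinely independent), and only permutations are needed for the symmetrization application.

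The only delicate point is the interplay of the two triangulations: ``small'' must be taken with respect to $\mathcal{T}'$, not $\mathcal{T}$, since that is exactly what pins the $n+1$ target vertices $V_i$ inside one simplex of $\mathcal{T}$ so that they span a simplex — without it $a(\sigma)$ would not be an affine simplex, and the inclusion of simplicial chains from Remark~\ref{r:inclusion} and the vanishing Lemma~\ref{l:vanish} could not be used downstream. Everything else is routine manipulation with barycentric coordinates and straight-line homotopies.
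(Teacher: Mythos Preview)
Your proof is correct and coincides with the paper's argument. Your vertex assignment ``$<_{\mathcal T}$--least vertex maximizing $b_\bullet(\sigma(e_i))$'' is exactly the paper's function $n(\sigma(e_i))$ (least vertex whose closed $\mathcal T'$--star contains $\sigma(e_i)$), and your straight-line homotopy inside $\mathrm{carr}(\sigma(x))$ is the paper's homotopy inside its simplex $\tau(x)$; the only difference is that you phrase everything in barycentric coordinates rather than star neighborhoods.
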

\begin{proof}
  The idea is illustrated quite well by the figure given in the proof of
  Proposition 6.5.1 in Thurston's notes \cite{Th}.  
  We give a little more detail.  (The figure given
  there is not of a combinatorial triangulation.  In that case a total
  order on the ``corners'' of the simplices must be given, rather than
  on the vertices.  We avoid this complication here by assuming a
  combinatorial triangulation.)

  We begin by defining the straightening $a(\sigma)$.
  For $x\in X$ define $n(x)$ to be the least vertex $v$
  of $\mathcal{T}$ so that $x$ is in the closed star neighborhood of
  $v$.  Let $\{v_0,\ldots,v_n\}$ be the vertices of the standard
  simplex $\Delta^n$.
  Since $\sigma$ is small, the image of $\sigma$ lies in the
  open star neighborhood of some simplex $\tau$ of $\mathcal{T}$.  (In fact
  there may be several choices for $\tau$, but the maps $a(\sigma)$
  and $f_t(\sigma)$ we define will not depend on the choice.)
  It follows
  that $n(\sigma(v_i))$ is a vertex of $\tau$ for each $i$.  We define
  $a(\sigma)(v_i) = n(\sigma(v_i))$ and then extend to an affine map
  $a(\sigma)\co \Delta^n\to \tau$.  It is clear that if
  $F$ is a face of $\Delta^n$ we have $a(\sigma|F) = a(\sigma)|F$.

  We now must define the homotopies $f_t(\sigma)$.  For each $x\in
  \Delta^n$, we let $\tau(x)$ be the unique simplex of $\mathcal{T}$
  so that 
  \begin{enumerate}
  \item\label{containsx} $\sigma(x)\in \tau(x)$, and
  \item $\tau(x)$ has minimal dimension subject to \eqref{containsx}.
  \end{enumerate}
  Since $\tau(x)$ lies in the open star neighborhood of $\tau$, the
  simplex $\tau$ is a face of $\tau(x)$.  Thus both
  $\sigma(x)$ and $a(\sigma)(x)$ lie in $\tau(x)$.  Convex combination is
  well-defined for points in the same simplex, so we may take, for
  $t\in [0,1]$,
  \begin{equation*}
  f_t(\sigma)(x) = (1-t)\sigma(x) + t\, a(\sigma)(x). 
  \end{equation*}
  The reader may check that $f_t$
  is well-defined, continuous, and satisfies the property
  $f_t(\sigma|F)=f_t(\sigma)|F$ for each face $F$ of $\Delta^n$.

  Finally, we note that if $\tau$ is some affine map from $\Delta^n$
  to itself so that $\sigma\circ \tau = \sigma$, then
  $a(\sigma\circ\tau) = a(\sigma)$ and therefore $f_t(\sigma\circ\tau)
  = f_t(\sigma)$ for all $t$.
\end{proof}

\begin{lemma}\label{l:simplicial}
  Let $N$ be a manifold, and let
  $\mathcal{T}$ be a triangulation of $\partial N$.  
  If $c\in {C_k}(N; \bR)$ is a relative singular cycle in
  $(N,\partial N)$ so that  $\partial c$ is small with respect to 
  $\mathcal{T}$, then $c$ is (relatively)
  homologous to
  $c'$ with $\|c'\|_1\leq \|c\|_1$ and $\partial c'$ an affine chain.  If
  $c$ was symmetric, then so is $c'$.
\end{lemma}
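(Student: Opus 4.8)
The plan is to use Lemma \ref{l:affine} applied to the triangulation $\mathcal{T}$ of $\partial N$, after first making $\partial c$ small (it already is, by hypothesis), and then to extend the resulting homotopy of $\partial c$ over a collar of $\partial N$ so as to straighten the boundary without touching the norm estimate badly. First I would apply Lemma \ref{l:affine} to each small simplex of $\partial c$: this produces an affine chain $a(\partial c)$ together with a homotopy $f_t(\partial c)$ from $\partial c$ to $a(\partial c)$; because the construction respects faces, $f_t$ is consistent across the boundary maps of the simplices of $\partial c$, so $a(\partial c)$ is again a cycle and $f_t$ carries cycles to cycles. Since $\partial(\partial c)=0$, the homotopy $f_t$ assembles into a chain $P(\partial c)\in C_k(\partial N;\bR)$ (the "prism" over the homotopy) with $\partial P(\partial c) = a(\partial c) - \partial c$ (up to the sign convention for the prism operator), and $P(\partial c)$ lives in $\partial N$.

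Next I would use a collar neighborhood $\partial N\times[0,1]\hookrightarrow N$ to push the prism $P(\partial c)$ slightly into the interior, obtaining a chain $h\in C_k(N;\bR)$ with $\partial h = a(\partial c) - \partial c$ as a chain in $N$ (here I am implicitly using that the inclusion $\partial N\hookrightarrow N$ lets me regard $P(\partial c)$ as a chain in $N$; strictly the collar is there only to keep things clean). Then set $c' = c + h$. We get $\partial c' = \partial c + \partial h = a(\partial c)$, which is affine, and $c'$ is relatively homologous to $c$ in $(N,\partial N)$ since $h$ is (the boundary correction lives in $\partial N$, or more precisely $c'-c=h$ has $\partial h$ supported in $\partial N$, so $[c']=[c]$ in $H_k(N,\partial N;\bR)$). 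The symmetry claim follows because Lemma \ref{l:affine} explicitly preserves invariance under precomposition with affine maps, so if $c$ is symmetric then $a(\partial c)$, $f_t$, hence $h$ and $c'$ are symmetric as well.

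The one genuine issue is the norm bound $\|c'\|_1\le\|c\|_1$: since $c' = c + h$ with $h\ne 0$ in general, the triangle inequality only gives $\|c'\|_1\le\|c\|_1+\|h\|_1$, which is not good enough. The fix is to observe that the straightening homotopy $f_t$ moves each simplex of $\partial c$ within a single simplex of $\mathcal{T}$ (by the definition of "small" and the construction of $\tau(x)$ in Lemma \ref{l:affine}), so after pushing into the collar and then pushing all the way to the boundary again — i.e. by choosing the collar parameter to degenerate — the prism chain $h$ can be taken to degenerate onto $\partial N$; but the real point is that we do not add $h$ to $c$ and keep both, rather we \emph{replace} the chain $c$ by the chain obtained from straightening $c$ itself via a compatible homotopy. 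Concretely: extend the straightening of $\partial c$ to a homotopy of all of $c$ (using the collar to interpolate from $\sigma$ at the boundary side to $a(\sigma|_{\partial})$ there, and doing nothing far from $\partial N$), apply this homotopy simplexwise to each simplex of $c$, and take $c'$ to be the resulting chain. Since a homotopy of a single singular simplex produces a single singular simplex (not a sum), each $|\alpha_i|$ is unchanged, so $\|c'\|_1\le\|c\|_1$, with equality of the coefficients. This homotopy-of-the-whole-chain viewpoint is the main thing to get right; once it is set up, face-compatibility (again from Lemma \ref{l:affine}) guarantees $c'$ is a relative cycle with $\partial c'$ affine, and the symmetry is immediate.
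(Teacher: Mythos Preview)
Your final paragraph has the right idea---replace each simplex of $c$ by a single homotopic simplex so the coefficients (hence the $\ell^1$--norm) are unchanged---but there is a genuine gap in how you obtain consistency.  The straightening $f_t$ of Lemma~\ref{l:affine} is a homotopy of each small singular simplex in $\partial N$, not a self-homotopy of the space $\partial N$; so it cannot be ``extended over a collar'' to a homotopy $N\times[0,1]\to N$ that you then post-compose with each $\sigma_i$.  You are therefore forced to extend the prescribed boundary homotopy over each $\sigma_i\colon\Delta^k\to N$ separately.  The difficulty is the \emph{internal} faces: if $\sigma_i$ and $\sigma_j$ agree on a codimension-one face that does \emph{not} land in $\partial N$, your two extensions must agree there, or the cancellation that made $\partial c$ supported in $\partial N$ will fail and $c'$ will not be a relative cycle.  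Your appeal to ``face-compatibility from Lemma~\ref{l:affine}'' only governs faces lying in $\partial N$; it says nothing about these internal identifications, and a generic choice of extension over each $\Delta^k$ will not be consistent.

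The paper resolves exactly this point by \emph{preassembling} the simplices into a CW complex $K_c$: one takes a copy $\Delta^k_{[i]}$ for each (equivalence class of) simplex in $c$ and glues $\Delta^k_{[i]}$ to $\Delta^k_{[j]}$ along any codimension-one face on which $\sigma_i$ and $\sigma_j$ agree up to affine reparametrization.  The $\sigma_i$ then assemble to a single map of pairs $\phi\colon(K_c,\partial K_c)\to(N,\partial N)$, where $\partial K_c$ is the subcomplex carrying $\partial\tilde c$.  Lemma~\ref{l:affine} homotopes $\phi|_{\partial K_c}$ to a simplicial map, and the homotopy extension property for the CW pair $(K_c,\partial K_c)$ extends this to a homotopy $\Phi_t$ of all of $\phi$; one sets $c'=(\Phi_1)_\sharp(\tilde c)$.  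Because internal shared faces are literally identified in $K_c$, consistency is automatic, $c'$ is a relative cycle with $\partial c'$ affine, and each simplex of $c$ has been replaced by a single simplex, giving $\|c'\|_1\le\|c\|_1$.  This preassembly-plus-HEP step is the missing device in your argument.
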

\begin{proof}
  The chain $c$ is a finite linear combination 
  $c = \sum_{i=1}^{m'}\lambda_i\sigma_i$ of singular simplices 
  $\sigma_i\co \Delta^k\to N$.  The idea is to ``preassemble'' the simplices
  to get a map $\phi\co K_c \to N$ from a certain CW complex $K_c$, so
  that $c$ is  $\phi_\sharp$ of a linear combination of characteristic
  maps.   We can use Lemma \ref{l:affine} to homotope the ``boundary''
  of $K_c$ to a simplicial map, and then apply the homotopy extension
  property of CW pairs.

  More precisely, let $J = \{1,\ldots m'\}/\sim$, where $i\sim j$ if
  $\sigma_i = \sigma_j\circ q$ for some permutation $q$.  We define
  \begin{equation}\label{kcdef}
    K_c = \left( \coprod_{[i]\in J} \Delta^k_{[i]} \right)/\sim 
  \end{equation}
  where $x_1\sim x_2$ whenever there are $(k-1)$--dimensional faces
  $F_1\subset \Delta^k_{[i_1]}$ and $F_2\subset \Delta^k_{[i_2]}$
  satisfying
  \begin{enumerate}
  \item $x_j\in F_i$ for $j\in \{1,2\}$, and
  \item there is an affine map $\tau\co F_1\to F_2$ so that $\tau(x_1)
    = x_2$ and $\sigma_{i_1}|F_1 = \sigma_{i_2}\circ\tau$.
  \end{enumerate}
  That is, if two singular simplices agree on a codimension $1$ face, we glue
  the corresponding faces together in $K_c$.  Moreover, if a
  codimension $1$ face is unchanged by precomposing with some isometry,
  we identify the face to itself by the corresponding
  symmetry.  
  We thus obtain a CW complex $K_c$ and a map $\phi\co
  K_c\to N$ given by $\phi(x) = \sigma_{i}(x)$ where $x\in
  \Delta^k_{[i]}$.  In the complex $K_c$
  we can canonically build a chain $\tilde{c}$ made up of
  ``characteristic maps'' $\Delta^k_{[i]}\to K_c$ so that
  \[ c = \phi_\sharp(\tilde{c}).\]
  If $c$ was symmetric, then so is $\tilde{c}$.

  The boundary $\partial\tilde{c}$ is supported in some minimal
  subcomplex of $K_c$, which we'll refer to as $\partial K_c$.  
  Because $c$ is a relative cycle, the
  map $\phi$ described in the last paragraph is really a map of pairs
  \[ \phi \co (K_c,\partial K_c)\to (N,\partial N) .\]
  
  After fixing some total order on the simplices of $\mathcal{T}$,
  Lemma \ref{l:affine} gives a homotopy in $\partial N$ of
  $\phi|\partial K_c$ to a ``simplicial'' map $\psi$; this map is
  simplicial in the sense that if $F$ is any face of one of the
  simplices $\Delta^k_{[i]}$ used to define $K_c$, and if $\iota_F$ is
  the inclusion map, then $\phi\circ \iota_F$ is affine, with image in a
  single simplex of $\mathcal{T}$.

  By the homotopy extension property of CW pairs, the homotopy from
  $\phi|\partial K_c$ to $\psi$ can be extended to all of $K_c$.  Let 
\[ \Phi_t \co K_c \to N\mbox{, }t\in[0,1] \]
  be this homotopy, with $\Phi_0 = \phi$.  The chain
  \[ c' = (\Phi_1)_\sharp(\tilde{c}) \]
  has all the properties we want.
\end{proof}

We need to show that tori can be triangulated so that every
simplex of diameter much smaller than the injectivity radius is
small.  Say that a triangulation of a torus
is \emph{$K$--fat} if every singular simplex
of diameter less than $K$ is small with respect to that triangulation.

\begin{lemma}\label{l:fattori}
  For each $d\geq 1$ there is a constant $0<K_d<1$ so that every flat torus of
  dimension $d$ with injectivity radius $R$ has a $(K_dR)$-fat
  triangulation with $t_d = 2^d d!$ simplices.
\end{lemma}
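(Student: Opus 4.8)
The plan is to exhibit one explicit triangulation of the standard flat $d$--torus $\mathbb{R}^d/\mathbb{Z}^d$ (scaled so the injectivity radius is $R$) and then check two things: that it has exactly $t_d = 2^d d!$ simplices, and that any singular simplex of diameter less than some universal $K_d R$ lands in an open star of a vertex of the barycentric subdivision. I would take the triangulation coming from the \emph{permutohedral} (or ``Freudenthal'') subdivision of the unit cube: cut $[0,1]^d$ into $d!$ simplices, one for each permutation $p \in S_d$, namely $\{0 \le x_{p(1)} \le x_{p(2)} \le \cdots \le x_{p(d)} \le 1\}$. Gluing up opposite faces of the cube and first passing to the half-scale lattice $(\tfrac{1}{2}\mathbb{Z})^d/\mathbb{Z}^d$ (so there are $2^d$ small cubes, each contributing $d!$ simplices) gives a genuine \emph{combinatorial} triangulation of the torus with $2^d d!$ top-dimensional simplices; after rescaling by $R$ this lives on the torus of injectivity radius $R$. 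The reason for the half-scale refinement is exactly to make the triangulation combinatorial (no simplex wraps around, all vertex-identifications are trivial), so that Lemma~\ref{l:affine} and Lemma~\ref{l:simplicial} apply to it; I would spend a sentence or two verifying combinatoriality.

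Next I would bound the fatness constant. The open stars of vertices of the barycentric subdivision $\mathcal{T}'$ form an open cover of the torus; by compactness this cover has a Lebesgue number $\lambda > 0$, and any subset of diameter less than $\lambda$ is contained in a single such open star — hence any singular simplex of diameter $<\lambda$ is small. Since everything in sight is built from the single combinatorial model of the unit cube by scaling, $\lambda$ scales linearly: one gets $\lambda = K_d R$ for a constant $K_d$ depending only on $d$. Making sure $K_d < 1$ is harmless since we are free to shrink it. Concretely, one can take $K_d$ to be (a fixed fraction of) the minimal distance, in the unrescaled model, from a vertex of $\mathcal{T}$ to the boundary of its open star neighborhood in $\mathcal{T}'$; this is a positive number computable from the combinatorics of the permutohedral subdivision of the cube, and it is what controls the Lebesgue number.

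The main obstacle, and the place I expect the proof actually needs care, is the Lebesgue-number step: one must be sure that the open stars of \emph{vertices of the barycentric subdivision} (not of $\mathcal{T}$ itself — the cover by open stars of simplices of $\mathcal{T}$ in Definition~\ref{d:small} is a \emph{different} cover) genuinely cover the torus and that a uniform Lebesgue number can be extracted and shown to scale linearly in $R$. A clean way to make the scaling rigorous, avoiding any soft compactness argument, is to compute directly: a point $x$ in the interior of some $d$--simplex $\tau$ of $\mathcal{T}$ lies in the closed star of a barycentric vertex $\hat{F}$ (the barycenter of a face $F \le \tau$) precisely when $x$'s barycentric coordinates in $\tau$, sorted, satisfy certain linear inequalities; one then reads off the largest $r$ such that the metric $r$--ball around $\hat{F}$ stays inside the open star, minimized over all barycentric vertices, in the scaled-down model. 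This quantity is an explicit positive number $K_d$ (one can even be generous and just cite that it is positive, since the statement only asks for existence of $K_d$), and because the whole picture is an affine rescaling by $R$ of a fixed one, the $r$--ball bound rescales to $K_d R$. With that, any singular simplex of diameter $< K_d R$ has image of diameter $< K_d R$, hence lies in one such open star, hence is small; and the simplex count $2^d d! = t_d$ was arranged by construction.
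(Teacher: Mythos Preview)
Your argument has a genuine gap: it only treats the \emph{standard cubic} torus $\mathbb{R}^d/\mathbb{Z}^d$ (rescaled), whereas the lemma must cover \emph{every} flat $d$--torus of injectivity radius $R$.  Flat tori come in a whole moduli space --- any $\mathbb{R}^d/\Lambda$ for a rank-$d$ lattice $\Lambda$ --- and these are not all isometric after rescaling.  In the paper the relevant tori are cusp cross-sections (and their covers) of hyperbolic manifolds, which can have arbitrary shape.  Your Freudenthal triangulation of the half-scale cubic lattice, pushed forward by the linear map taking $\mathbb{Z}^d$ to $\Lambda$, would still be a triangulation with $2^d d!$ simplices, but the fatness bound does not survive: if $\Lambda$ has a very short vector together with a nearly-parallel long one, the pushforward distorts distances by an arbitrarily large factor, so your Lebesgue number $K_d R$ computed on the model shrinks to zero relative to the actual injectivity radius.

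This is exactly the obstacle the paper's proof is built to overcome.  The authors replace your fixed cubic model by the parallelepiped spanned by an LLL-reduced basis of $\Lambda$; Babai's inequality $\sin\theta_k \ge (\sqrt{2}/3)^d$ says such a basis has uniformly bounded angles, so after rescaling every basis vector to unit length one lands in a \emph{compact} family $U$ of reference shapes.  The triangulation they use is the first barycentric subdivision of that parallelepiped (this is where $2^d d!$ appears: it is the number of flags in the face lattice of a $d$-cube).  Compactness of $U$ gives a uniform lower bound $L$ on fatness for all unit-scale shapes, and since the shortest lattice vector has length $2R$, monotonicity under distance-decreasing maps yields the bound $K_d R$ for the original torus.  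If you want to rescue your approach, you need some analogous device --- a uniform control on the shape of the lattice --- before any Lebesgue-number argument can give a bound depending only on $d$ and $R$.
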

\begin{proof}
  Let $T$ be a flat $d$--torus, which we can think of as
  $\bR^d/\Lambda$, where $\Lambda$ is some rank $d$ lattice.  Every
  such lattice has a Lov\'asz--reduced basis \cite{LLL82} 
  $\mc{B} = \{\bv_1,\ldots,\bv_d\}$.
  For $k\in \{1,\ldots,d\}$ define $\theta_k$ to be the angle made by
  $v_k$ with the subspace of $\bR^d$ spanned by the rest of the basis.
  Babai \cite{Babai86} showed that, for all k,
  \[ \sin(\theta_k)\geq\left(\frac{\sqrt{2}}{3}\right)^d.\] 

  Any basis $\mc{B}$ of $\Lambda$ gives rise in a canonical way to a triangulation
  $\mc{T}_{\mc{B}}$ of $T$ with $t_d$ simplices
  given by the first barycentric subdivision of the parallelepiped
  spanned by $\mc{B}$.  

  Let $V$ be the set of all ordered bases of lattices in $\bR^d$, topologized
  as a subset of $\bR^{d^2}$, and
  let $F\co V\to \bR_+$ be the function which sends $\mc{B}$ to the
  least real $k$ so that $\mc{T}_{\mc{B}}$ is $k$--fat.  The number
  $F(\mc{B})$ is also the least number $k$ so that 
  triangulation of $\bR^d$ covering $T_{\mc{B}}$ is $k$--fat.
   We note the
  following useful properties of $F$:
  \begin{enumerate}
  \item $F$ is continuous and positive everywhere on $V$.
  \item\label{scale} If $\lambda\in \bR_+$ and $\mc{B}\in V$, then 
    $F(\lambda \mc{B}) = \lambda F(\mc{B})$.
  \item\label{squash} If there is a length-decreasing map from $\mc{B}$ to $\mc{B'}$
    then $F(\mc{B})\geq F(\mc{B}')$.
  \end{enumerate}

  Let $U\subset V$ be the set of bases consisting of unit vectors, any
  two of which make an angle $\theta$ with $\sin(\theta)\geq
  \left(\frac{\sqrt{2}}{3}\right)^d$.  Since $U$ is 
  compact, $F$ takes some minimum, positive value $L$ on $U$.  We
  will show that every torus of injectivity radius $\geq 1/2$ has a
  triangulation which is $L$--fat.  Linearity of $F$ (property
  \eqref{scale} above) then implies the lemma for $K_d = 2L$.

  Let $T=\bR^d/\Lambda$ be a torus of injectivity radius at least $1/2$.
  A Lov\'asz-reduced basis $\mc{B}=\{\bv_1,\ldots,\bv_d\}$ for $\Lambda$ has all angles
  between basis vectors within $[\pi/3,2\pi/3]$, by Babai's result.
  Since the injectivity radius of $T$ is at least $1/2$, there is a
  distance-decreasing  map from $\mc{B}$ to a basis in $U$, namely 
\[
 \mc{B}'=\left\{\frac{\bv_1}{\|\bv_1\|},\ldots,\frac{\bv_d}{\|\bv_d\|}\right\}. \]
  Property \eqref{squash} above tells us that $F(\mc{B})\geq
  F(\mc{B}')\geq L$, so the triangulation $\mc{T}_{\mc{B}}$ is
  $L$--fat, and the proof of the lemma is completed.
\end{proof}

\begin{lemma}\label{l:thurstonlemma}
For each $d\geq 1$ and each $s$ there is a constant $K(d,s)$ so that
any null-homologous
real singular $s$--cycle $z$ in a $d$--dimensional torus bounds a singular
$(s+1)$--chain $c$ with
\[ \|c\|_1\leq K(d,s) \|z\|_1\]
\end{lemma}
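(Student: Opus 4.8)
The plan is to establish this quantitative filling inequality by combining amenability of the fundamental group of a torus with the standard duality between the $\ell^1$ filling seminorm and bounded cochains; since the argument uses nothing about $z$ beyond its being a cycle with $[z]=0$, the resulting constant will in fact not depend on $d$. Write $T=T^d$, and for a null-homologous $s$-cycle $z$ set
\[ \mathrm{fill}(z)=\inf\bigl\{\,\|c\|_1 : c\in C_{s+1}(T;\bR),\ \partial c=z\,\bigr\}; \]
the goal is to bound $\mathrm{fill}(z)$ by $K(d,s)\|z\|_1$.

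First I would record the dual description of $\mathrm{fill}$. For any $w$ with $\partial w=z$, the number $\mathrm{fill}(z)$ is the quotient norm of $w+\ker\partial$ in $C_{s+1}(T;\bR)/\ker\partial$ (with the $\ell^1$-norm), and since the continuous dual of a quotient $E/F$ is isometrically the annihilator $F^\perp\subseteq E^*$, Hahn--Banach yields
\[ \mathrm{fill}(z)=\sup\bigl\{\,\langle\psi,z\rangle : \psi\in C^s(T;\bR),\ \|\delta\psi\|_\infty\le 1\,\bigr\}, \]
using that a bounded functional on $C_{s+1}(T;\bR)$ vanishing on $\ker\partial$ is exactly a coboundary $\delta\psi$ (whose norm as a functional is $\|\delta\psi\|_\infty$), together with $\langle\delta\psi,w\rangle=\langle\psi,\partial w\rangle=\langle\psi,z\rangle$.

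The analytic heart is to bound $\langle\psi,z\rangle$ for each admissible $\psi$. Since $\|\delta\psi\|_\infty\le 1$, the cochain $\delta\psi$ is a bounded $(s+1)$-cocycle on $T$. Because $\pi_1T\cong\bZ^d$ is amenable, $H^{s+1}_b(T;\bR)\cong H^{s+1}_b(\bZ^d;\bR)=0$, and --- this is the point --- the vanishing is uniform: there is a constant $C=C(s+1)$, depending only on the degree, so that every bounded $(s+1)$-cocycle $\phi$ on $T$ equals $\delta\eta$ for some bounded $\eta\in C^s(T;\bR)$ with $\|\eta\|_\infty\le C\|\phi\|_\infty$. (One produces $\eta$ either by applying the open mapping theorem to the bounded surjection $\delta\co C^s_b(T;\bR)\to Z^{s+1}_b(T;\bR)$ --- for $\bZ^d$ the bounded coboundaries form a closed subspace --- or, concretely, by averaging a primitive over a F\o lner exhaustion of $\bZ^d$ in the homogeneous bar complex and transferring along the norm-controlled comparison maps realizing Gromov's isomorphism $H^*_b(T)\cong H^*_b(\pi_1T)$.) Applying this to $\phi=\delta\psi$ gives $\eta$ with $\|\eta\|_\infty\le C$ and $\delta(\psi-\eta)=0$, so $\psi-\eta$ is an $s$-cocycle; since $z$ is a cycle with $[z]=0$ in $H_s(T;\bR)$ we get $\langle\psi-\eta,z\rangle=0$, hence $\langle\psi,z\rangle=\langle\eta,z\rangle\le\|\eta\|_\infty\|z\|_1\le C\|z\|_1$. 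Taking the supremum over $\psi$ gives $\mathrm{fill}(z)\le C(s+1)\|z\|_1$, and the lemma holds with $K(d,s)=C(s+1)$.

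I expect the main obstacle to be exactly the uniform bound $\|\eta\|_\infty\le C(s+1)\|\delta\psi\|_\infty$: the bare identity $H^{s+1}_b(T;\bR)=0$ only supplies, for each bounded cocycle separately, some bounded primitive, and upgrading this to a single constant valid for all cocycles is where amenability of $\bZ^d$ (rather than some weaker finiteness property) genuinely enters. Doing it carefully requires either verifying that the bounded coboundaries of $T$ form a closed subspace, so that the open mapping theorem applies, or carrying out the F\o lner averaging explicitly together with bookkeeping on the norms of the chain maps implementing $H^*_b(T)\cong H^*_b(\bZ^d)$. As a more self-contained alternative in the spirit of Thurston's notes, one could instead straighten $z$ through the universal cover $\bR^d$ (a $\bZ^d$-equivariant, norm non-increasing operation, chain homotopic to the identity through a homotopy of controlled norm) and then fill the resulting affine cycle by averaging a straight cone filling over a fundamental domain of $\bZ^d$; this is essentially the predual of the argument above and yields a constant of the same type.
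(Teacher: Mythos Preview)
Your argument is correct and takes a genuinely different route from the paper's.  The paper follows Thurston's geometric approach: given $z$, it passes to a finite cover $\tilde T\to T$ whose injectivity radius exceeds the diameters of the lifted simplices, transfers $z$ up, and then uses a fixed combinatorial triangulation of $\tilde T$ (furnished by Lemma~\ref{l:fattori}) to straighten the transferred cycle to an \emph{affine} cycle via Lemma~\ref{l:affine}.  The straightening homotopy contributes a chain of norm $\le C_1(s)\|z\|_1$, and the residual affine cycle lives in a finite-dimensional space of affine boundaries whose filling norm is controlled by a constant $C_2(d,s)$ depending only on the combinatorics of the triangulation; projecting back down finishes.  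Your approach instead dualises: you identify $\mathrm{fill}(z)$ via Hahn--Banach with a supremum over cochains $\psi$ with $\|\delta\psi\|_\infty\le 1$, and then exploit amenability of $\bZ^d$ (hence $H^{s+1}_b(T;\bR)=0$) together with the open mapping theorem to replace $\psi$ by a bounded cochain $\eta$ with $\|\eta\|_\infty\le C$ and the same pairing against~$z$.

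Each approach has its advantages.  The paper's argument is self-contained within the straightening machinery already developed for the main theorem (Lemmas~\ref{l:affine} and~\ref{l:fattori}), and makes the constant fairly explicit in terms of the triangulation.  Your argument is shorter, more conceptual, and immediately generalises to any aspherical space with amenable fundamental group; it also explains \emph{why} such a bound should exist (uniform acyclicity of the bounded complex) rather than constructing fillings by hand.  One small caveat: the open mapping theorem gives a constant depending on the Banach complex $C^*_b(T;\bR)$, hence a priori on $d$; your stronger claim that $K$ is independent of $d$ would require the explicit F{\o}lner averaging you sketch, but since the lemma only asks for $K(d,s)$ this is not needed.  Your closing remark about straightening in $\bR^d$ and averaging a cone filling is essentially the predual of the paper's argument and would reproduce it.
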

\begin{proof}
We follow the proof of \cite[Proposition 6.5.1]{Th},which discusses
the case $d=s=2$. 
Let $T$ be a $d$--dimensional torus, and let $z$ be an $s$--cycle.
Since there are only finitely many singular simplices in $z$,
their lifts to the universal cover $\bR^d$ have diameters
bounded by some $D>0$.  Let $K_d$ be the constant from Lemma
\ref{l:fattori}, and let $\tilde{T}\stackrel{\pi}{\longrightarrow} T$
be some cover of $T$ with injectivity radius bigger than $D/K_d$.  By
Lemma \ref{l:fattori}, there is a triangulation $\mathcal{T}$ 
of $\tilde{T}$ with $t_d$ simplices so that 
if $\tilde{\sigma}\co \Delta^s\to \tilde{T}$ is any lift of a singular
simplex appearing in $z$, then $\tilde{\sigma}$ is small with respect
to $\mathcal{T}$ (in the sense of Definition \ref{d:small}).   
The proof of Lemma \ref{l:fattori} shows that the combinatorics of
this triangulation are independent of the particular chain $z$ and the
cover $\tilde{T}$.

Write $C_*(X)$ for the real singular chain complex of $X$.
There is a transfer map  
\[ \mathrm{trans}\co C_*(T;\bR)\to C_*(\tilde{T};\bR)\]
which sends any singular simplex to the average of its lifts.  The map
$\mathrm{trans}$ is an isometric embedding with respect to the $l^1$
norm.  
The map
$\pi_\sharp\co C_*(\tilde{T};\bR)\to C_*(T;\bR)$ induced by the
covering is norm decreasing and 
$(\pi_\sharp\circ \mathrm{trans})$ is the identity map.

By the choice
of $\tilde{T}$ and $\mathcal{T}$, the chain $\mathrm{trans}(z)$ is
small.  By Lemma \ref{l:affine}, the simplices of 
$\mathrm{trans}(z)$ can be homotoped 
simultaneously and consistently to the affine singular simplices
making up the singular chain $a(\mathrm{trans}(z))$.
Triangulating this homotopy in a standard way, we find
that $\mathrm{trans}(z) - a(\mathrm{trans}(z))$ bounds an $(s+1)$--chain
$c_1$
in $T$, so that
\[ \|c_1\|_1 \leq C_1\|z\|_1 \]
for some constant $C_1$ depending on the dimension $s$ but otherwise
independent of $z$.  The space of affine real singular 
$s$--boundaries $B$ is
finite dimensional, and only depends on the combinatorics of the triangulation
$\mathcal{T}$.  

The \emph{affine filling norm} $\|b\|_{\mathrm{fill}}$ on $B$ is defined,
for a boundary $b\in B$, to be the smallest $l^1$--norm of an affine
$(s+1)$--chain bounded by $b$.   Let
\[C_2(d,s) = \sup \left\{\|b\|_{\mathrm{fill}}\mid b\in B \mbox{ and } \|b\|_1=1\right\},\]
and note that $C_2(d,s)$ only depends on the
combinatorics of $\mathcal{T}$.

The boundary $a(\mathrm{trans}(z))$ bounds an $(s+1)$--chain
$c_2$
in $T$ of $l^1$--norm at most $C_2(d,s)\|a(\mathrm{trans}(z))\|_1\leq C_2\|z\|_1$.  
Thus we have 
\[ \mathrm{trans}(z) = \partial (c_1+c_2) \]
and 
\[ z = \pi_\sharp (\partial (c_1+c_2) ) = \partial (\pi_\sharp(c_1 + c_2))\]
 and 
\[ \|\pi_\sharp(c_1 + c_2)\|_1\leq \|c_1+c_2\|_1 \leq \left(C_1(s)+ C_2(d,s)\right)\|z\|_1.\]
\end{proof}

We now prove the main theorem.

\begin{proof}[Proof of Theorem \ref{t:volumedecreases}]
We suppose that $M$ is a compact $n$--manifold with boundary a union
of tori, so that the interior admits a complete hyperbolic metric of
finite volume.  We fix a $2\pi$--filling $M(T_1,\ldots,T_m)$.  
To prove the theorem we must show
\[0< \|M(T_1,\ldots,T_m)\| \leq \frac{\Vol(V)}{v_n}, \]
where $v_n$ is the volume of a regular ideal hyperbolic $n$--simplex.

The lower bound follows from \cite{MY} and is given above in Corollary
\ref{c:lowerbound}.

We now establish the upper bound.
The strategy is to first
choose a symmetric representative $c_0$ for the (relative)
fundamental class of $(M,\partial M)$
which is close to optimal, and then modify it to
obtain a fundamental class $c_0'$
for $M(T_1,\ldots,T_m)$.  If such a
modification can be done in such a way that 
$|c_0'|_1$ is within some arbitrarily small constant of
$\|M,\partial M\|$, then we will have shown: 
\[ \|M(T_1,\ldots,T_m)\|\leq \|M,\partial M\| \]

The first step is to pass to a rational cycle.  Let $\epsilon>0$.
Let $c_0\in C_n(M;\bR)$ be a real symmetric
singular relative cycle representing
$[M,\partial M]$, and satisfying 
\begin{equation}
\|c_0\|_1<\|M,\partial M\|+\epsilon
\end{equation}
By Lemma \ref{l:rational}, there is a symmetric
rational singular relative cycle
$c_1\in C_n(M;\bQ)$ homologous to $c_0$ with
$\|c_1-c_0\|_1<\epsilon$.

We now pick a finite cover 
and triangulation of the boundary so that $\partial c_1$ lifts to a
chain which is small with respect to that triangulation.  Let $t_{n-1}
= 2^{n-1}(n-1)!$.
\begin{claim}\label{claim:passtocover}
We can
choose a finite cover $\tilde{M}\stackrel{\pi}\longrightarrow M$
and a triangulation $\mc{T}$ of $\partial\tilde{M}$
satisfying the conditions: 
\begin{enumerate}
\item\label{degree} The restriction of $\pi$ to any component of $\partial\tilde{M}$ has degree
  bigger than $m/\epsilon$, where $m$ is the number of cusps of $M$.
\item\label{small} If $\mathrm{trans}\co C_*(M;\bR)\to
  C_*(\tilde{M};\bR)$ is the transfer map, then
  $\mathrm{trans}(\partial c_1)$ is small with respect to $\mc{T}$.
\item\label{boundedcomb} Each boundary component of $\tilde{M}$ is
  triangulated by $t_{n-1}$ $(n-1)$--simplices.
\end{enumerate}
\end{claim}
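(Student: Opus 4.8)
The plan is to build a single deep finite cover whose boundary tori are simultaneously so large that Lemma~\ref{l:fattori} supplies triangulations with $t_{n-1}$ simplices, and at the same time so large that the transferred boundary chain is made of small simplices.

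First I would fix the relevant scales. Since $c_1$ is a finite chain, $\partial c_1$ involves only finitely many singular simplices of $\partial M$; lifting each one to the universal cover $\bR^{n-1}$ of the flat torus among $N_1,\dots,N_m$ containing it, all such lifts have image of diameter at most some $D>0$. Let $K=K_{n-1}\in(0,1)$ be the constant of Lemma~\ref{l:fattori}, and let $\omega$ denote the volume of the unit ball in $\bR^{n-1}$. Choose $R>0$ large enough that
\[ KR>D \qquad\text{and}\qquad \omega R^{\,n-1} \;>\; \frac{m}{\epsilon}\,\max_{1\le j\le m}\Vol(N_j). \]

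Next I would produce a finite \emph{normal} cover $\pi\co\tilde M\to M$ all of whose boundary components have injectivity radius greater than $R$. Realize $\overline M$ as $(\bH^n\smallsetminus\bigcup\mathcal{B})/G$, where $G=\pi_1M$ and $\mathcal{B}$ is the $G$--orbit of finitely many disjoint open horoballs $B_1,\dots,B_m$ with peripheral stabilizers $P_j=\mathrm{Stab}_G(B_j)$ and $N_j=\partial B_j/P_j$. For each $j$ the set of non-identity elements of $P_j$ whose translation length along $\partial B_j$ is at most $2R$ is finite, since $P_j$ is a lattice acting on $\partial B_j\cong\bR^{n-1}$ by translations; call the union of these finitely many sets $S\subset G\smallsetminus\{1\}$. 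As $\pi_1M$ is finitely generated and linear, it is residually finite, so there is a finite-index normal subgroup $H\trianglelefteq G$ with $H\cap S=\emptyset$. Put $\tilde M=(\bH^n\smallsetminus\bigcup\mathcal{B})/H$. Because $H$ is normal, every boundary component of $\tilde M$ lying over $N_j$ is isometric to $\partial B_j/(H\cap P_j)$, so by the choice of $H$ it has injectivity radius $>R$; moreover it covers $N_j$ with degree $[P_j:H\cap P_j]=\Vol(\partial B_j/(H\cap P_j))/\Vol(N_j)\ge \omega R^{\,n-1}/\Vol(N_j)>m/\epsilon$, which is condition~\eqref{degree}.

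Finally I would triangulate. Each boundary component of $\tilde M$ is a flat $(n-1)$--torus of injectivity radius $R'>R$, so Lemma~\ref{l:fattori} gives it a $(KR')$--fat triangulation with exactly $t_{n-1}=2^{n-1}(n-1)!$ simplices, which is in particular $(KR)$--fat; let $\mc{T}$ be the union of these, giving condition~\eqref{boundedcomb}. For condition~\eqref{small}: every simplex occurring in $\mathrm{trans}(\partial c_1)$ is a lift to some boundary component $\tilde N$ of a simplex of $\partial c_1$, and its further lift to $\bR^{n-1}$ has image of diameter at most $D<KR$; since the covering $\bR^{n-1}\to\tilde N$ is $1$--Lipschitz, this simplex has diameter $<KR$ in $\tilde N$, hence is small with respect to $\mc{T}$ by definition of $(KR)$--fatness (and an open star in one component of $\partial\tilde M$ is contained in that component, so ``small in $\tilde N$'' is ``small in $\mc{T}$''). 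The only step with real content is the residual-finiteness argument that simultaneously enlarges all cusp cross-sections; the remainder is bookkeeping with Lemma~\ref{l:fattori} together with the elementary bound $\Vol\ge\omega R^{\,n-1}$ for a flat $(n-1)$--torus of injectivity radius $>R$.
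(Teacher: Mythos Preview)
Your proof is correct and follows essentially the same route as the paper: bound the diameters of the finitely many boundary simplices, use residual finiteness of $\pi_1M$ to pass to a finite cover whose boundary tori have large enough injectivity radius, and then invoke Lemma~\ref{l:fattori}. The only cosmetic difference is that you deduce the degree condition~\eqref{degree} from the injectivity-radius bound via $\Vol(\tilde N)\ge\omega R^{n-1}$, whereas the paper imposes the area condition separately when choosing the cover; your packaging is slightly more economical but not materially different.
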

\begin{proof}
  The compact manifold $M$ is homeomorphic to $\overline{M}$ as
  described in the introduction,
  a hyperbolic manifold
  minus a union of disjoint horospherical 
  neighborhoods of its cusps.  We identify $M$ with $\overline{M}$.
  Any simplex in the support of $\partial c_1$ therefore lifts to $\bH^n$.  
  Only finitely many such simplices occur, so there some number $r$
  bounding the diameter of any such lift.  Residual finiteness of
  $\pi_1M$ implies
  that we can pass to a finite cover $\tilde{M}\to M$ so that 
  \begin{enumerate}
  \item  Every boundary
    component of $\tilde{M}$ has injectivity radius bigger than $r/K_{n-1}$,
    where $K_{n-1}$ is the constant from Lemma \ref{l:fattori}, and
  \item  Every boundary component $N$ of $\tilde{M}$ has area at least $m/\epsilon$ times
    more than the area of the component of $\partial M$ covered by $N$.
  \end{enumerate}
  Lemma \ref{l:fattori} implies that $\partial \tilde{M}$  admits an
  $r$--fat triangulation with $t_{n-1}$ $(n-1)$--simplices per component 
  so that singular simplices lifted from $\partial c$ are all small
  respect to this triangulation.  
\end{proof}

Let $D$ be the degree of the cover coming from Claim
\ref{claim:passtocover}.  
Since $c_1$ is rational and symmetric,
$\mathrm{trans}(c_1)$ must also be rational and symmetric.  In particular,
\[ \mathrm{trans}(c_1) = \frac{1}{q} c_2 \]
for some integral symmetric chain $c_2\in C_*^S(\tilde{M};\bZ)$ and some positive
integer $q$.  Homologically $[c_2] =
\frac{q}{D}[\tilde{M},\partial\tilde{M}]$ in
$H_{n}(\tilde{M},\partial\tilde{M};\bR)$.  Since $\mathrm{trans}\co
C_*(M;\bR)\to C_*(\tilde{M};\bR)$ is
isometric, $\|c_2\|_1 = q\|c_1\|_1$.

Since $\mathrm{trans}(\partial c_1)$ is small with respect to
$\mc{T}$, the chain $\partial c_2$ is also small with respect to
$\mc{T}$.
Using Lemma \ref{l:simplicial}, we find a symmetric
integral chain $c_3$ with $\|c_3\|_1\leq \|c_2\|_1$ so that $c_3$
is (relatively) homologous to $c_2$ and
so that $\partial c_3$ 
is an affine chain, i.e., $\partial c_3\in C^{\mc{T}}_{n-1}(\partial
\tilde{M};\bZ)$. 

Let $\tilde N_1,\ldots,\tilde N_p$ be the boundary components of $\tilde{M}$.
We have $[c_3] = \frac{q}{D}[\tilde{M},\partial\tilde{M}]$ in
$H_{n}(\tilde{M},\partial\tilde{M})$ and $[\partial c_3]
=\frac{q}{D}\sum_{i=1}^p[\tilde N_i]$ in $H_{n-1}(\partial\tilde M)$.

The cycle $\partial c_3$ is symmetric, affine and represents $\frac{q}{D}$
times the fundamental class of $\partial \tilde M$.  By Lemma
\ref{l:vanish}, 
$\partial c_3$ has no ``degenerate'' affine simplices
in its support.
It follows that
$\partial c_3$ consists of exactly $\frac{q}{D}$ appropriately oriented copies
of every $(n-1)$--simplex in $\mc{T}$.

Each of the $p$
boundary components of $\tilde{M}$ is triangulated with exactly
$t_{n-1}$ simplices of dimension $n-1$.
We therefore have
\[\|\partial c_3\|_1 = p\, t_{n-1} \frac{q}{D}.\]
For each $i$ between $1$ and $n$, the $i$th boundary component of $M$
is covered by $p_i$ boundary components of $\tilde{M}$, so that
$p=\sum_i p_i$.  The degrees of these components are
$d_{i1},\ldots,d_{ip_i}$, and $\sum_j p_{ij} = D$ for each $i$.  Since
each $d_{ij}\geq \frac{m}{\epsilon}$, we get
\[ D\geq p_i \frac{m}{\epsilon} \forall i, \]
which implies that $mD \geq p \frac{m}{\epsilon}$, and so
$p\leq \epsilon D$.
We
therefore obtain
\[\|\partial c_3\|_1 \leq \epsilon t_{n-1} q.\]

Projecting $c_3$ back down to $M$ gives a chain $c_4$ with $[c_4]=
q[ M,\partial M]$ in relative homology, and
with 
\[\|c_4\|_1\leq \|c_3\|_1\leq\|c_2\|_1=
q\|c_1\|_1, \]
and with 
\[\|\partial c_4\|_1\leq \|\partial c_3\|_1 \leq
\epsilon t_{n-1} q.\]  
Dividing by $q$ we obtain a fundamental cycle $c = \frac{1}{q}c_4$
for $[ M,\partial M]$ satisfying $\|c\|_1 \leq
\| M,\partial M\|+2\epsilon$ and $\|\partial c\|_1\leq t_{n-1}\epsilon$.

Now we will take this relative cycle $c$ for $[ M,\partial M]$, and
modify it to an honest cycle in $M(T_1,\ldots,T_m)$.  We start by
pushing it forward from $M$ to $M(T_1,\ldots,T_m)$ by a map of pairs
\[h\co (M,\partial M)\to
 (M(T_1,\ldots,T_m),Z),\]
where $Z$ is the singular set of $M(T_1,\ldots,T_m)$ and $h$
is the quotient map described in Definition \ref{d:filling}.
(Here we are implicitly identifying $M$ with the space $\overline{M}$
described before Definition \ref{d:filling}.)
Let
$\partial h$ be $h$ restricted to $\partial M$.  The chain 
$(\partial h)_\sharp(\partial c)$ is an $(n-1)$--cycle in $Z$.  Lemma
\ref{l:thurstonlemma}
implies  there is some $K$ depending only on $n$ 
so that 
$(\partial h)_\sharp(\partial c)$ can be filled in $Z$ with a
$n$--chain $c'$  satisfying 
$\|c'\|_1\leq K \|(\partial h)_\sharp(\partial c)\|_1
\leq K t_{n-1}\epsilon$.
It is easy to check that $h_\sharp c-c'$ is a fundamental
cycle for $M(T_1,\ldots,T_m)$.  Moreover, we have
\[ \|h_\sharp c-c'\|_1 \leq \| M,\partial M\|+(2+Kt_{n-1})\epsilon. \]
Letting $\epsilon$ tend to zero, we have established
\[ \|M(T_1,\ldots,T_m)\| \leq \| M,\partial M\|.\]
An application of the proportionality theorem for finite volume
hyperbolic manifolds (Theorem \ref{t:proportionality}) gives the upper
bound:
\[ \|M(T_1,\ldots,T_m)\| \leq \frac{\Vol(V)}{v_n}.\]
\end{proof}

\appendix
\section{A proportionality theorem}\label{s:prop}
The purpose of this appendix is to establish the following
proportionality theorem.
\begin{theorem}\label{t:proportionality}
  Let $M$ be a manifold with boundary, so that the interior $V$
  of $M$ admits a complete hyperbolic metric of volume $\Vol(V)<\infty$.
  Then 
  \[ \|M,\partial M\| = \frac{\Vol(V)}{v_n}. \]
\end{theorem}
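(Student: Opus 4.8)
The plan is to establish the two inequalities $\|M,\partial M\|\geq \Vol(V)/v_n$ and $\|M,\partial M\|\leq \Vol(V)/v_n$ separately. The closed case is classical \cite{gromov:vbc,BP92}, so the real content is the presence of cusps: one must relate a \emph{relative} fundamental cycle of the compact core to the \emph{total} hyperbolic volume of the open manifold $V=\mathrm{Int}(M)=\Gamma\backslash\bH^n$. Throughout I would identify $M$ with the truncation $\overline M$ obtained by deleting a union of disjoint open horoball cusp neighborhoods, so that each component of $\partial M$ is a flat $(n-1)$--torus lifting to a horosphere in $\bH^n$ based at a parabolic fixed point of $\Gamma$.

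For the lower bound, fix $\epsilon>0$ and choose a relative fundamental cycle $z\in C_n(M;\bR)$ with $\|z\|_1<\|M,\partial M\|+\epsilon$. Because the boundary tori have amenable fundamental group --- equivalently, by reusing the cover-and-fat-triangulation construction from the proof of Theorem \ref{t:volumedecreases} in Section \ref{s:main} --- we may moreover arrange $\|\partial z\|_1<\epsilon$. Since $\Delta^{n-1}$ is simply connected, every simplex of $\partial z$ lifts to the horosphere covering the relevant boundary torus, and coning these lifts geodesically to the parabolic fixed point at which that horosphere is based (then projecting back to $V$) produces a locally finite $n$--cycle $\hat z$, possibly with ideal vertices, representing the locally finite fundamental class of $V$ under the standard isomorphism $H_n(M,\partial M)\cong H_n^{\mathrm{lf}}(\mathrm{Int}\,M)$, and satisfying $\|\hat z\|_1\leq\|z\|_1+\|\partial z\|_1<\|M,\partial M\|+2\epsilon$. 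Now straighten $\hat z$ geodesically in $\bH^n$, fixing the ideal vertices: straightening is natural, does not increase $\ell^1$--norm, and replaces each simplex by a geodesic (possibly ideal) simplex of hyperbolic volume at most $v_n$. A degree/change-of-variables argument shows that pairing the straightened cycle with the hyperbolic volume form $\omega$ recovers $\Vol(V)$, so $\Vol(V)\leq v_n\|\hat z\|_1<v_n(\|M,\partial M\|+2\epsilon)$, and $\epsilon\to0$ gives $\|M,\partial M\|\geq\Vol(V)/v_n$.

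For the upper bound I would use Gromov's smearing construction. Let $G=\mathrm{Isom}^+(\bH^n)$ with its Haar measure; since $\Gamma=\pi_1 V$ is a lattice, $\Gamma\backslash G$ has finite volume. Fix a regular ideal $n$--simplex $\Delta_0$ in $\bH^n$; since parabolic fixed points of $\Gamma$ are dense in $\partial\bH^n$ we may take the vertices of $\Delta_0$ to be parabolic fixed points up to an error making $\vol(\Delta_0)$ within $\delta$ of $v_n$. Smearing $\Delta_0$ over $\Gamma\backslash G$ and alternating over the orderings of its vertices produces a measure $n$--cycle on $V$ whose total mass equals the covolume and which, after normalizing by its algebraic volume, represents the locally finite fundamental class of $V$; Gromov's computation of the ratio of mass to algebraic volume yields an $\ell^1$--mass at most $\Vol(V)/(v_n-\delta)$. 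It then remains to convert this into an honest relative singular fundamental cycle for $(M,\partial M)$ of comparable norm: one passes from measure homology to singular homology with control on the $\ell^1$--norm, and from a locally finite cycle on $V$ to a relative cycle on $(M,\partial M)$ by truncating the ideal simplices near their ends --- equivalently, one observes that $\|M,\partial M\|$ equals the locally finite simplicial volume of $\mathrm{Int}\,M$ and invokes the proportionality principle for finite-volume locally symmetric spaces. Letting $\delta\to0$ gives $\|M,\partial M\|\leq\Vol(V)/v_n$.

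The routine parts are the straightening and coning bookkeeping; the main obstacle lies in the upper bound, where non-compactness intervenes twice: regular ideal simplices with vertices \emph{exactly} at parabolic fixed points need not exist, so a limiting argument is required, and one must verify that neither the measure-to-singular conversion nor the truncation of ideal simplices inflates the $\ell^1$--norm. The only subtlety in the lower bound --- producing a near-optimal relative cycle whose boundary has small $\ell^1$--norm --- is already supplied by the argument of Section \ref{s:main}.
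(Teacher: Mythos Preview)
Your overall architecture---separate inequalities, straightening for the lower bound, smearing for the upper bound---matches the paper's.  But the two halves differ in how much work they actually do.

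\textbf{Lower bound.}  Your route (force $\|\partial z\|_1<\epsilon$, cone to the cusps' parabolic fixed points, straighten the resulting locally finite cycle, integrate the volume form) is more elaborate than necessary.  The paper's Lemma~\ref{l:lowerbound} never controls $\|\partial z\|_1$ at all: it simply replaces $(M,\partial M)$ by the homotopy-equivalent pair $(V,V_{(0,\epsilon]})$, straightens a near-optimal relative cycle there, and observes that the algebraic volume of any straight relative fundamental cycle is at least $\Vol(V_{[\epsilon,\infty)})$, while each straight simplex has volume at most $v_n$.  Letting $\epsilon\to 0$ finishes.  Your argument can be made to work, but the coning-to-ideal-points step needs care (what exactly is a ``singular simplex with an ideal vertex'' in $C_*(V)$, and why does the degree computation survive?), and it borrows the $\|\partial z\|_1<\epsilon$ machinery from Section~\ref{s:main}, which the paper does not need here.

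\textbf{Upper bound.}  Here there is a genuine gap, and you have correctly located it yourself.  You produce the smeared \emph{measure} cycle with the right mass, and then write ``one passes from measure homology to singular homology with control on the $\ell^1$--norm \ldots\ equivalently, one observes that $\|M,\partial M\|$ equals the locally finite simplicial volume of $\mathrm{Int}\,M$ and invokes the proportionality principle for finite-volume locally symmetric spaces.''  The first option is precisely what the paper says is \emph{not} known to hold in the relative case (L\"oh's isometry theorem is for absolute homology only), and the second option is circular: the proportionality principle you would invoke is Theorem~\ref{t:proportionality} itself.  The truncation idea (cut ideal simplices near the cusps) is not obviously norm-preserving either, since truncating and retriangulating a frustum multiplies the number of simplices.

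The paper's Proposition~\ref{p:upperbound} is exactly designed to close this gap without measure homology.  It discretizes the smearing by hand: chop the fundamental domain $D$ into pieces $D_0,D_{j,l}$ of uniformly bounded diameter (the ``cusped space'' decomposition), replace the Haar integral by a weighted sum over tuples of pieces to produce honest \emph{singular} chains $z_{R,k}$, and show that for $k$ large enough relative to $R$ these are relative cycles in $(V,V_{(0,\mu]})$ representing a multiple of the fundamental class.  The volume-per-simplex estimate (Claim~\ref{c:volume}) and the sign consistency (Claim~\ref{c:positive}) then give $\Vol(V)\geq (v_n-\delta(R))\|M,\partial M\|$ directly, with no appeal to measure homology or to any external proportionality statement.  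This discretization-of-smearing step is the substantive content your sketch is missing.
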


Thurston gives a proof in dimension $3$ of a version of theorem
\ref{t:proportionality} in his lecture notes \cite[6.5.4]{Th}, where
the norm $\|M,\partial M\|$ is replaced by one coming from measure
homology.  There is a natural map from singular homology to
measure homology, which at least does not increase norm.
Zastrow
and Hansen independently showed this map to be an isomorphism of
vector spaces for any CW pair
\cite{Han98,Zas98}, leaving open the question of whether it was an
isometry. 
L\"oh \cite{Loh06} proved
that the  map from (absolute) singular homology to measure homology
is an isometric isomorphism for all connected CW complexes.
Since it is not entirely clear whether measure homology is
isometric to singular homology in the relative case, we give a proof
of Theorem \ref{t:proportionality}
which avoids measure homology.  
Such a proof can also be obtained as a special (and simpler) case of
the arguments in 
Frigerio--Pagliantini \cite{FrPa}.  
Some related ideas may be found
in Francaviglia \cite{Fr04}.

Our proof of Theorem \ref{t:proportionality}
follows closely Benedetti and Petronio's proof of
\cite[C.4]{BP92}, which covers the case of
$\partial M = \emptyset$.  As some parts of our proof are identical to
steps in \cite{BP92}, we refer to that text for some details.
The proof in \cite{BP92} uses ordinary
real singular chains, rather than symmetrized ones, so we do not
symmetrize our chains here.

We use the following notation for parts of the manifold $V$:
For $I\subseteq \bR_{>0}$, we let $V_I$ be
the subset of $V$ consisting of points $x\in V$ where the injectivity
radius $\mathrm{inj}(V,x)$ is in $I$.
\begin{definition}
  Let $M$ be a Riemannian $n$--manifold, and 
  let $\sigma\co \Delta^n \to M$ be a smooth singular simplex.  The
  \emph{algebraic volume} of $\sigma$, written $\algvol(\sigma)$ is the
  integral over $\Delta^n$ of the pullback of the volume form on $M$.
  The algebraic volume is extended to smooth chains by linearity.
\end{definition}

Obtaining the correct lower bound for simplicial volume is fairly
easy.  
\begin{lemma}\label{l:lowerbound}
  $\|M,\partial M\|\geq \frac{\Vol(V)}{v_n}$.
\end{lemma}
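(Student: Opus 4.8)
The plan is to run the geodesic--straightening argument used for the closed case in \cite[C.4]{BP92}, taking care of the cusps so that essentially all of $\Vol(V)$ gets counted. Fix $\epsilon>0$. Since $\|M,\partial M\|$ depends only on the homotopy type of the pair, I may replace $M$ by a compact core $\overline M=V\smallsetminus(B_1\cup\cdots\cup B_k)$, where the $B_j$ are disjoint embedded open horoball neighborhoods of the cusps chosen small enough that $\Vol(B_1\cup\cdots\cup B_k)<\epsilon$; then $\Vol(\overline M)>\Vol(V)-\epsilon$, and $\partial\overline M$ lies far out in the cusps. By definition of the Gromov norm, and applying the smoothing operator (which is chain homotopic to the identity and does not increase $l^1$--norm), I can pick a \emph{smooth} relative fundamental cycle $c\in C_n(\overline M;\bR)$ for $(\overline M,\partial\overline M)$ with $\|c\|_1\le\|M,\partial M\|+\epsilon$.

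Straightening is the next step. As in the closed case, lifting a singular simplex to $\bH^n$, replacing it by the geodesic simplex on the same vertices, and projecting back down defines a chain map $\mathrm{str}$ on singular chains in $V$ which does not increase $l^1$--norm, sends every simplex to a smooth geodesic simplex, and is chain homotopic to the identity via the straight--line homotopy; let $H$ be the associated prism operator, so that $\mathrm{str}(c)-c=\partial H(c)+H(\partial c)$. Two geometric facts are used: every geodesic $n$--simplex in $\bH^n$ has volume $\le v_n$ (see \cite{BP92,Th}), so $|\algvol(\mathrm{str}(c))|\le v_n\|\mathrm{str}(c)\|_1\le v_n\|c\|_1$; and the local degree of such a simplex at any regular value is $0$ or $\pm1$, so the straight chain $\mathrm{str}(c)$ satisfies $|\deg_{\mathrm{str}(c)}(p)|\le\|c\|_1$ for a.e.\ $p$. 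Moreover, each $B_j$ lifts to a convex horoball in $\bH^n$ and $\partial c$ is supported in $\partial\overline M\subset\bigcup_j\partial B_j$, so $\mathrm{str}(\partial c)$ and the straight--line homotopy between $\partial c$ and $\mathrm{str}(\partial c)$ stay inside $\bigcup_j\overline{B_j}$; hence $H(\partial c)$ is supported there too.

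Now the computation. For a.e.\ $p$ (avoiding the finitely many images of codimension--one faces), $\deg_{\mathrm{str}(c)}(p)=\deg_c(p)+\deg_{\partial H(c)}(p)+\deg_{H(\partial c)}(p)$: the middle term vanishes because $\partial H(c)$ is a boundary; the last term vanishes for $p\notin\bigcup_j\overline{B_j}$; and because $c$ is a relative fundamental cycle supported in $\overline M$, one has $\deg_c(p)=1$ for a.e.\ $p\in\mathrm{Int}(\overline M)$ and $\deg_c(p)=0$ off $\overline M$. So the change--of--variables formula gives $\algvol(\mathrm{str}(c))=\int_V\deg_{\mathrm{str}(c)}\,\omega_V=\Vol(\overline M)+\int_{\bigcup_j B_j}\deg_{\mathrm{str}(c)}\,\omega_V$, with error term of absolute value at most $\|c\|_1\Vol(\bigcup_jB_j)<\epsilon\|c\|_1$. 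Combining with $|\algvol(\mathrm{str}(c))|\le v_n\|c\|_1$,
\[
\Vol(V)-\epsilon<\Vol(\overline M)\le(v_n+\epsilon)\|c\|_1\le(v_n+\epsilon)(\|M,\partial M\|+\epsilon),
\]
and letting $\epsilon\to0$ yields $\Vol(V)\le v_n\|M,\partial M\|$, which is the claim.

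The only step that genuinely uses the cusped (rather than closed) setting is this volume bookkeeping: a priori $\mathrm{str}(c)$ may wrap over the deleted cusp tips with local degree as large as $\|c\|_1$, so to finish one must know both that those tips carry arbitrarily little hyperbolic volume and that convexity of horoballs confines the discrepancy $\mathrm{str}(c)-c$ to them. Everything else --- the smoothing and straightening operators and the bound $v_n$ on volumes of geodesic simplices --- is exactly as in the closed case treated in \cite{BP92,Th}.
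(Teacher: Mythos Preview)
Your approach is the same as the paper's---Thurston's straightening argument---and your degree bookkeeping on $\mathrm{Int}(\overline M)$ is correct. There is, however, a genuine gap in your bound on the error term over the cusp neighborhoods. The claim that a straight singular simplex $\mathrm{str}(\sigma_i)\co\Delta^n\to V$ has local degree $0$ or $\pm1$ at regular values is false: while the lift $\Delta^n\to\bH^n$ is an embedding onto a geodesic simplex, the projection $\bH^n\to V$ is a covering map, and a geodesic simplex whose diameter exceeds the injectivity radius of $V$ covers some points of $V$ many times. Your original simplices $\sigma_i$ live in the compact core $\overline M$ but may have arbitrarily large diameter (they can wind around), so after straightening their local degrees are uncontrolled. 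This breaks the estimate $\bigl|\int_{\bigcup_j B_j}\deg_{\mathrm{str}(c)}\,\omega_V\bigr|\le\epsilon\|c\|_1$; without it your chain of inequalities yields only $\Vol(\overline M)\le 2v_n\|c\|_1$, off by a factor of two.

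The repair is easy and bypasses the error term entirely. You have already established $\deg_{\mathrm{str}(c)}=1$ a.e.\ on $\mathrm{Int}(\overline M)$. Now use that each straight simplex has Jacobian $\mathrm{str}(\sigma_i)^*\omega_V$ of \emph{constant sign} on $\Delta^n$, so that for any measurable $A\subset V$,
\[
\left|\int_{\mathrm{str}(\sigma_i)^{-1}(A)}\mathrm{str}(\sigma_i)^*\omega_V\right|\le\bigl|\algvol(\mathrm{str}(\sigma_i))\bigr|\le v_n.
\]
Taking $A=\overline M$ and summing,
\[
\Vol(\overline M)=\left|\sum_i\lambda_i\int_{\mathrm{str}(\sigma_i)^{-1}(\overline M)}\mathrm{str}(\sigma_i)^*\omega_V\right|\le\sum_i|\lambda_i|\,v_n= v_n\|c\|_1,
\]
and letting $\epsilon\to0$ finishes as you intended. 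This is exactly the inequality the paper records as $\Vol(V_{[\epsilon,\infty)})\le v_n\|z_\epsilon\|_1$; the constant-sign property of straight simplices is what makes it work without any control over what happens in the thin part.
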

\begin{proof}
  This direction is Thurston's ``straightening'' argument \cite[6.5.4]{Th}.
  For any sufficiently small $\epsilon>0$, the subset $V_{(0,\epsilon]}$
  is an open neighborhood of the cusps of $V$.  Collapsing the
  components of $V_{(0,\epsilon]}$ to their boundaries gives a homotopy
  equivalence of pairs $(V,V_{(0,\epsilon]}) \to (M,\partial M)$.  
  Since the Gromov norm of a (relative) homology class is homotopy
  invariant,  we can use chains $(V,V_{(0,\epsilon]})$ to compute
  $\|M,\partial M\|$.  

  The straightening map (see \cite{BP92} or \cite{Rat94} for a precise
  definition) is a chain map
  \[ \mathrm{str}\co C_*(V) \to C_*(V), \]
  chain homotopic to the identity, and
  taking each singular simplex to a totally geodesic simplex with the
  same vertices. 
  The map $\mathrm{str}$
  preserves the subspace $C_*(V_{(0,\epsilon]})$ (because horoballs
  are convex), as does the chain homotopy between $\mathrm{str}$ and
  the identity, so $\mathrm{str}$ induces a (norm-decreasing)
  chain homotopy equivalence of $C_*(V,V_{(0,\epsilon]})$ to itself.
  It follows that we need only consider straight (and therefore
  smooth) chains.

  If $z = \sum_i^k \lambda_i \sigma_i$ is a real smooth
  chain representing the relative fundamental class
  $[V,V_{(0,\epsilon]}]$, then
  \[ \algvol(z) = \sum_i^k \lambda_i\ \algvol(\sigma_i). \]
  The absolute value $|\algvol(z)|$ is at least as big as
  $\Vol(V_{[\epsilon,\infty)})$, the volume of the thick part of $V$.

  Let $z_\epsilon=\sum_i^k \lambda_i \sigma_i$ be a straight real
  singular chain representing 
  $[V,V_{(0,\epsilon]}]$ which $\epsilon$--nearly realizes
  $\|M,\partial M\|$, i.e., so that
  \[ \|z_\epsilon\|_1 \leq \|M,\partial M\|+\epsilon. \]
  We then have
  \begin{equation}\label{volepsilon}
    \Vol(V_{[\epsilon,\infty)})\leq |\algvol(z_\epsilon)|\leq
      \sum_{i=1}^k|\lambda_i|v_n = \|z_\epsilon\|_1 v_n \leq (\|M,\partial M\|+\epsilon)v_n.
  \end{equation}
  Letting $\epsilon$ tend to zero in \eqref{volepsilon} yields the lemma. 
\end{proof}
It is worth noting that essentially the same proof establishes the
bound of Lemma \ref{l:lowerbound} for measure cycles.

To obtain the upper bound for simplicial volume, we must construct
chains representing the fundamental class which are ``close'' to
the smeared chains from measure homology.  The argument from
\cite{BP92} in the closed case uses rather strongly that there
is a compact (and therefore finite \emph{diameter}) fundamental
domain.  We work around this by chopping the fundamental domain into
pieces of bounded diameter.  

\begin{proposition}\label{p:upperbound}
 $ \|M,\partial M\| \leq \frac{Vol(V)}{v_n}$
\end{proposition}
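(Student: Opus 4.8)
The plan is to adapt the ``smearing'' argument by which Benedetti and Petronio prove the closed case \cite[C.4]{BP92}, the one genuinely new ingredient being a discretisation that survives the non-compactness of a fundamental domain for $\Gamma:=\pi_1(V)$. As in the proof of Lemma \ref{l:lowerbound}, for all sufficiently small $\epsilon>0$ collapsing the components of $V_{(0,\epsilon]}$ to their boundaries identifies $\|M,\partial M\|$ with the Gromov norm of the relative fundamental class of $(V,V_{(0,\epsilon]})$. It therefore suffices to produce, for each $\delta>0$, a relative singular cycle for $(V,V_{(0,\epsilon]})$ representing the fundamental class and of $\ell^1$--norm at most $\Vol(V)/v_n+\delta$.

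First I would set up the smearing. Regard $\Gamma$ as a lattice in $G:=\mathrm{Isom}^+(\bH^n)$, fix a Borel fundamental domain $\mathcal F\subset G$ for the left $\Gamma$--action, and a Haar measure $\mu$ on $G$ normalised so that the measure it induces on $V=\Gamma\backslash\bH^n$ (via an orbit map $G\to\bH^n$ with compact $\mathrm{SO}(n)$ fibres) is the Riemannian measure; then $\mu(\mathcal F)=\Vol(V)$. Given $\delta>0$, choose a compact geodesic $n$--simplex $\Sigma=\Sigma_\delta$ in $\bH^n$ which is regular and satisfies $0<v_n-\vol(\Sigma)<\delta$: such simplices exist because the volumes of regular geodesic $n$--simplices increase to $v_n$ as their vertices tend to the ideal vertices of a regular ideal simplex. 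Let $\sigma_\Sigma\co\Delta^n\to\bH^n$ be a geodesic parametrisation of $\Sigma$, let $p\co\bH^n\to V$ be the covering, and consider the formal ``smeared chain'' $z:=\frac{1}{\vol(\Sigma)}\int_{\mathcal F}p_\sharp(g\cdot\sigma_\Sigma)\,d\mu(g)$. The computations of \cite[C.4]{BP92}---pairing with the volume form to read off the degree, and exploiting the $S_{n+1}$--symmetry of the regular simplex to cancel boundary faces---show that, after using convexity of horoballs to straighten the parts of simplices lying in $V_{(0,\epsilon]}$ radially out to the horospheres (which does not increase $\ell^1$--norm), $z$ represents the relative fundamental class of $(V,V_{(0,\epsilon]})$ and has total mass $\Vol(V)/\vol(\Sigma)$, a quantity that tends to $\Vol(V)/v_n$ as $\delta\to0$.

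Next I would discretise $z$, and this is the step the cusps force me to change. In the closed case $\mathcal F$ is compact, so approximating $z$ by a finite weighted sum of singular simplices is automatic; here $\mathcal F$ is only a non-compact Borel fundamental domain, so I would first partition $\mathcal F=\bigsqcup_{k\in\bN}\mathcal F_k$ into Borel pieces of diameter less than some $\eta=\eta(\delta)>0$, small enough that for $g,g'$ in a common piece the straight singular simplices $g\cdot\sigma_\Sigma$ and $g'\cdot\sigma_\Sigma$ are uniformly $C^0$--close (for $\Sigma$ fixed this is uniform over all pieces, since $G$ acts by isometries on the fixed compact set $\Sigma$). Choosing $g_k\in\mathcal F_k$ and setting $\lambda_k:=\mu(\mathcal F_k)/\vol(\Sigma)$ and $\sigma_k:=p_\sharp(g_k\cdot\sigma_\Sigma)$, the series $\sum_k\lambda_k\sigma_k$ has finite total mass $\sum_k|\lambda_k|=\Vol(V)/\vol(\Sigma)$, so I may truncate it to a finite chain $c:=\sum_{k\le K}\lambda_k\sigma_k$ with $\|c\|_1\le\Vol(V)/\vol(\Sigma)$ and $\sum_{k>K}|\lambda_k|<\delta$. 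Since each $\sigma_k$ is $C^0$--close to the nearby simplices it represents, $\partial c$ is carried by $V_{(0,\epsilon]}$ up to a correction built from triangulating the short homotopies between $\sigma_k$ and those simplices, as in the proof of Lemma \ref{l:thurstonlemma}; this correction has $\ell^1$--norm $O(\delta)$, and adding it to $c$ produces a genuine relative cycle $c'$ for $(V,V_{(0,\epsilon]})$ with $\|c'\|_1\le\Vol(V)/\vol(\Sigma)+O(\delta)$. Since $\algvol(c')=\Vol(V)(1+O(\delta))$ and $H_n(V,V_{(0,\epsilon]};\bR)$ is spanned by the fundamental class, $[c']$ is $\lambda$ times the fundamental class with $\lambda=1+O(\delta)\neq0$, so after rescaling $\|M,\partial M\|\le\frac{1}{|\lambda|}\|c'\|_1\le\Vol(V)/v_n+O(\delta)$; letting $\delta\to0$ finishes the proof.

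I expect the main obstacle to be exactly this discretisation-and-correction step in the presence of cusps. In the closed case there is no boundary and compactness of $\mathcal F$ does everything, whereas here one must (i) decompose the non-compact $\mathcal F$ into uniformly small Borel pieces and pass from a convergent infinite sum to a finite chain, and (ii) make sure that the unavoidable small errors, together with the portions of the straight simplices protruding into the cusps, can be absorbed into the relative subspace $V_{(0,\epsilon]}$ and killed by a chain of arbitrarily small $\ell^1$--norm, never inflating the norm beyond $\Vol(V)/v_n+o(1)$. Many of the required estimates run parallel to those in \cite{BP92}, \cite{Fr04} and \cite{FrPa}, and can be adapted from there.
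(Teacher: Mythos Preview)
Your overall strategy---smearing a near-ideal regular simplex and then discretising---matches the paper's adaptation of \cite[C.4]{BP92}, but the discretisation step has a genuine gap. The claim that the correction to $\partial c$ has $\ell^1$--norm $O(\delta)$ does not hold. In the smeared chain $z$ the boundary faces cancel \emph{exactly}, because reflecting a regular simplex across a facet yields another regular simplex sharing that facet with opposite orientation, and Haar measure is reflection--invariant. Once you replace the integral by a sum over representatives $g_k\in\mathcal F_k$, this cancellation is destroyed: the face of $g_k\cdot\sigma_\Sigma$ that should cancel against a face of some $g_{k'}\cdot\sigma_\Sigma$ now only \emph{nearly} coincides with it, and the weights $\lambda_k=\mu(\mathcal F_k)$ and $\lambda_{k'}=\mu(\mathcal F_{k'})$ need not even agree. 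Triangulating the prism between two $\eta$--close $(n-1)$--faces costs $n$ top--dimensional simplices at weight $\lambda_k$; summed over all $(n+1)$ faces of all simplices this gives a correction of order $n(n+1)\|c\|_1\approx n(n+1)\Vol(V)/v_n$, a fixed multiple of the volume rather than $O(\delta)$. Shrinking $\eta$ makes each prism geometrically thinner but leaves its $\ell^1$--contribution unchanged. This is exactly the point at which one would want an isometry between relative measure homology and relative singular homology, which the paper explicitly declines to assume.

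The paper's remedy is to discretise so that the reflection symmetry---and hence the exact boundary cancellation---survives. It chops the fundamental domain $D\subset\bH^n$ (not $\mathcal F\subset G$) into bounded--diameter pieces: the thick part $D_0$ together with unit--depth horospherical layers $D_{j,l}$ of each cusp, and picks one basepoint per piece. For each $(n+1)$--tuple $\omega$ of pieces it takes as coefficient $a_R(\omega)$ the Haar measure of the set of regular side--$R$ simplices whose $i$th vertex lies in the $i$th piece, positively minus negatively oriented. Because the coefficient is itself a measure, the argument of \cite[Claim~(ii), p.~115]{BP92} goes through verbatim and $\partial z_{R,\infty}=0$ holds on the nose as a locally finite singular cycle. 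Non-compactness is then handled by truncating at depth $k$: since every simplex in the support has diameter at most $R+2d$, for $k\gtrsim R$ the truncated chain $z_{R,k}$ is a genuine finite relative cycle in $(V,V_{(0,\mu]})$, and its norm is estimated exactly as in the closed case. So the ``one new ingredient'' you correctly identified is chopping $D$ into bounded pieces; what is missing is that the discretisation must record piece--membership of vertices in $\bH^n$, not sample points of Haar measure on $G$.
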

\begin{proof}
We suppose that $V$ is an orientable finite volume 
hyperbolic $n$--manifold with $m$ cusps, homeomorphic to the interior
of $M$.
Let $\Gamma < \mathrm{Isom}(\bH^n)$ be the fundamental group of $V$,
so that $\bH^n/\Gamma = V$.
Let $D$ be the closure of a convex fundamental domain
for the action of $\Gamma$ on $\bH^n$, and let $\pi\co D\to V$ be
the quotient.

There is a number $\mu>0$ so that $V_{(0,\epsilon]}$ has exactly $m$
connected components for any $\epsilon\leq \mu$.  Let $P_0\subseteq V$
be equal to $V_{(\mu,\infty)}$, and choose some $x_0\in P_0$.  The
complement of $P_0$ in $V$ is a union of cusp neighborhoods
$C_1,\ldots,C_n$.

Let $D_0 = \pi^{-1}(P_0)\subset D$, and choose $\tilde{x}_0$ in the
interior of $D_0$.  Let $x_0 = \pi(\tilde{x}_0)\in P_0$.

For each $l\in \bN$, and each $j\in \{1,\ldots,m\}$, we let  
\[ P_{j,l} = \{x\in V\mid l-1\leq d(x,P_0)\leq l\}\cap C_j, \]
and let $D_{j,l} = \pi^{-1}(P_{j,l})\subset D$.  
We will refer to any $\Gamma$--translate of $D_0$ or $D_{j,l}$ as a
\emph{piece}.  

There is some $d>0$ so that $\mathrm{diam}(D_\alpha)<d$ for every piece
$D_\alpha\subset\bH^n$. 
For each $l\in \bN$ and each $j\in \{1,\ldots,m\}$, choose $\tilde{x}_{j,l}$ in
the interior of
$D_{j,l}$ and let $x_{j,l}=\pi(\tilde{x}_{j,l})$.  
The union of the $\Gamma$--orbits of $\tilde{x}_0$ and the
$\tilde{x}_{j,l}$ can be 
identified in an 
obvious way with 
\[
\hat{\Gamma}:= \Gamma\cup\left(\Gamma\times\{1,\ldots,m\} \times \bN\right).
\]
(This is essentially the $0$--skeleton of the ``cusped space'' of
\cite{GM} associated to the pair 
$\left(\Gamma,\{\pi_1(C_j)\}\right)$ .)
Vertices of $\hat{\Gamma}$ have a \emph{depth} associated to them:
The depth of an element of $\Gamma$ is $0$, and the depth of an
element of $\Gamma\times\{1,\ldots,m\}\times \bN$ is given by the third
coordinate.  
The
group $\Gamma$ acts
on $\hat{\Gamma}$ by left multiplication in an
obvious way, and this action preserves depth.

We let $\Omega$ be the quotient
$\raisebox{-2pt}{$\Gamma$}\!\!\leftquotient\!\hat{\Gamma}^{n+1}$.
For $k>0$ an integer, let $\hat{\Gamma}_k\subset \hat{\Gamma}$ be the
subset of vertices of 
depth at most $k$.  
Let $\tilde{\Omega}_k = \hat{\Gamma}_k^{n+1}$, and
let $\Omega_k\subset \Omega$ be the set of $\Gamma$--orbits in
$\tilde{\Omega}_k$. 

For any element $\omega=[(y_0,\ldots,y_n)]\in \Omega$, there is a
unique straight
simplex 
\[\sigma_\omega\co \Delta^{n}\to V\] 
which is equal to the composition of 
a simplex $\tilde{\sigma}_\omega\co
\Delta^{n}\to\bH^n$ satisfying $\tilde{\sigma}_\omega(v_i) = y_i$ with
the projection $\bH^n\to V$.  (Here $v_i$ is the $i$th vertex of
the $n$--simplex $\Delta^n$.) 
As in \cite{BP92}, we define
\[ \mathcal{S}(R) = \{(u_0,\ldots u_n)\in (\bH^n)^{n+1}
    \mid d(u_i,u_j)=R,\ \forall i\neq j\}
    = \mathcal{S}_+(R)\sqcup\mathcal{S}_-(R),\]
where $\mathcal{S}_+$ and $\mathcal{S}_-$ are those tuples which
determine positively and negatively oriented straight $n$--simplices in
$\bH^n$, respectively.  There is a measure on $\mathcal{S}(R)$ coming
from Haar measure on $\mathrm{Isom}(\bH^n)$, and we denote this
measure by $m$.  For 
\[\omega=\left[\left(y_0,\ldots,y_n\right)\right]
\in \Omega,\] 
and $R>0$, we define
\begin{align*}
 a_R^+(\omega)& = m \left(\left\{ 
   (u_0,\ldots,u_n)\in \mathcal{S}_+(R)\mid u_i\mbox{ and }y_i\mbox{ lie in
   the same piece}
 \right\}\right)\mbox{, and}\\
 a_R^-(\omega)& = m \left(\left\{ 
   (u_0,\ldots,u_n)\in \mathcal{S}_-(R)\mid u_i\mbox{ and }y_i\mbox{ lie in
   the same piece}
 \right\}\right).
\end{align*}
Obviously these definitions do not depend on the choice of
representative of $\omega$.  Moreover, for a given $R$ and $k$, there
are only finitely many $\omega\in \Omega_k$ for which $a_R^+(\omega)$
or $a_R^-(\omega)$ is nonzero. (This is because there are only
finitely many elements of $\hat{\Gamma}$ which are distance
approximately $R$ from a given element of $\hat{\Gamma}$.)
It follows that
if we define
$a_R(\omega) := a_R^+(\omega) - a_R^-(\omega)$, then
\[ z_{R,k} = \sum_{\omega\in \Omega_k} a_R(\omega)\sigma_\omega \]
is a finite real singular $n$--chain in $V$.  Moreover, we claim that 
\begin{equation}\label{zinfty}
 z_{R,\infty} = \sum_{\omega\in\Omega} a_R(\omega)\sigma_\omega 
\end{equation}
is a \emph{locally finite} real singular $n$--chain.  Indeed, 
let $p\in V$, and $r>0$.  We must show only finitely many simplices in
the support of $z_{R,\infty}$ (i.e.,
occurring in
the sum \eqref{zinfty} with nonzero coefficient)
intersect the ball of
radius $r$ about $p$.  Let $\tilde{p}$ be any point in
$\pi^{-1}(p)\subset D$,  and note that any simplex $\sigma_\omega$ in
the support of $z_{R,\infty}$ must lift to one which comes within $r$
of $\tilde{p}$, and whose vertices are therefore within $R+2d+r$ of
$\tilde{p}$.  As there are only finitely many such vertices, there are
only finitely many such simplices $\sigma_\omega$ in the support of
$z_{R,\infty}$. 
\begin{claim}\label{c:cycle}
  The chain $z_{R,\infty}$ is a locally finite cycle.
\end{claim}
\begin{proof}
  We have just argued that $z_{R,\infty}$ is locally finite.  The
  proof that $z_{R,\infty}$ is a cycle is nearly the same as the
  proof of Claim (ii) on pp. 115--116 of \cite{BP92}, except that
  sums/unions over $\Gamma$ are replaced by sums/unions over $\hat{\Gamma}$, and
  conditions of the form ``$u_i\in \gamma_i(D)$'' are replaced
  by ``$u_i \in D(i)$'' for appropriately chosen pieces $D(i)$.
\end{proof}

For $k\in \bN$, let $\epsilon(k)$ be the smallest injectivity radius at
any point in a piece of depth at most $k$, i.e.,
\[ \epsilon(k) = \inf\left\{\mathrm{inj}(V,x)\ \left|\  x\in
P_0\mbox{ or }x\in\bigcup_{}
\left\{P_{j,l}\mid  j\in\{1,\ldots,m\},
l\leq k 
\right\}
\right\}\right..\]
It is not hard to see that as $k\to \infty$, the quantity
$\epsilon(k)$ tends to zero.
In particular, there is a constant $C$
so that if $k>C$, then $\epsilon(k)<\mu$.  
\begin{claim}\label{c:relcycle}
  Let $R>0$, and let $C$ be such that $\epsilon(k')<\mu$ for all $k'>C$.
  If $k>R+C+2d+1$, then $z_{R,k}$ is a relative cycle in $(V,V_{(0,\mu]})$.
\end{claim}
\begin{proof}
  Suppose an $(n-1)$--dimensional simplex $\sigma$ is in the support
  of  $\partial z_{R,k}$.  By Claim \ref{c:cycle}, $\sigma$ must also be in the
  support of $\partial(z_{R,\infty}-z_{R,k})$, so any lift
  $\tilde{\sigma}$ of $\sigma$ to $\bH^n$ must have a vertex mapped to
  $\hat{\Gamma}\smallsetminus\hat{\Gamma}_k$.  The 
  diameter of the image of $\tilde\sigma$ is at most $R+2d$.  Thus the
  vertices lie in
  $\hat{\Gamma}\smallsetminus \hat{\Gamma}_{k'}$ for 
  $k' = k-\lceil R+2d \rceil>C$, and no part of the image of
  $\tilde\sigma$ lies in a translate of the ``fat piece'' $D_0$.
  Since $\epsilon(k')<\mu$, $\sigma$ has
  image in the part of $V$ with injectivity radius less than $\mu$.
\end{proof}
\begin{claim}\label{c:claimiii}
  If $R>2d$ then $a_R^+(\omega)\cdot a_R^-(\omega) = 0$ for all
  $\omega\in \Omega$.
\end{claim}
\begin{proof}
  This statement (and its proof) are identical to Claim (iii) on
  pp. 116--117 of \cite{BP92}.
\end{proof}

\begin{claim}\label{c:volume}
  There is a quantity $\delta(R)>0$ so that $\lim_{R\to\infty}\delta(R)
  = 0$ and
 so that the
  simplices in the support of $z_{R,\infty}$ all have volume at least
  $v_n-\delta(R)$.
\end{claim}
\begin{proof}
  This statement is a slight rephrasing of Claim (iv) on p. 117 of
  \cite{BP92}.   The proof is the same.
\end{proof}

\begin{claim}\label{c:positive}
  If $R>2d$ and $a_R(\omega) \neq 0$, then
  $a_R(\omega)\algvol(\sigma_\omega)>0$. 
\end{claim}
\begin{proof}
  See Claim (v) on p. 117 of \cite{BP92}. 
  Claim \ref{c:claimiii} is used here.    
\end{proof}

\begin{claim}\label{c:nontrivial}
  Suppose $R>2d+C$ and 
  $k>2R+1$, where $C$ is the constant from Claim \ref{c:relcycle}.
  The classes $[z_{R,\infty}]\in H_n^{lf}(V;\bR)$ and
  $[z_{R,k}]\in H_n(V,V_{(0,\mu]};\bR)$ are nontrivial.
\end{claim}
\begin{proof}
  See Claim (vi) on p.117 of \cite{BP92}.  The point here is just that
  some element of $\mathcal{S}(R)$ is a tuple of interior points of
  pieces corresponding to some tuple $\omega$, so that there is an
  open (and therefore positive measure) neighborhood of tuples of
  points all corresponding to the same tuple $\omega$.
\end{proof}
We now complete the proof of the Proposition.
Suppose $R>2d+C$ and $k = \lceil 2R \rceil+2$.

It follows from Claim \ref{c:nontrivial} that there is
some $\lambda>0$ so that $\lambda[z_{\infty,R}]$ is the fundamental
class in the locally finite homology group
$H_n^{lf}(V;\bR)$ and some $\lambda'$ so
$\lambda'[z_{k,R}]$ is the fundamental
class in $H_n(V,V_{(0,\epsilon]};\bR)$.
In fact we must have $\lambda'=\lambda$, since both
$\lambda[z_{\infty,R}]$ and $\lambda'[z_{k,R}]$ restrict to the
orientation cycle at $x_0$, and the 
set of simplices in the supports of $z_{k,R}$ and $z_{\infty,R}$
which intersect a small neighborhood
of $x_0$ is the same.

It follows from Claim \ref{c:positive} that 
$\algvol(z_{k,R})\leq\algvol(z_{\infty,R})$.  Moreover, an easy
argument shows that if $c$ is a locally finite fundamental cycle for
$V$, then $\Vol(V) = \algvol(c)$.  We therefore have:
\begin{align*}
  \Vol(V) & 
  \geq  \algvol(\lambda z_{k,R})& \\
 &  =  \sum_{\omega\in\Omega_k} \lambda a_R(\omega) \algvol(\sigma_\omega)& \\
 & =  \sum_{\omega\in\Omega_k} \lambda |a_R(\omega)|\cdot
  |\algvol(\sigma_\omega)|& \mbox{ by \ref{c:positive}}\\
 & \geq  (v_n - \delta(R))\sum_{\omega\in \Omega_k}\lambda |a_R(\omega)| & \mbox{ by
    \ref{c:volume}}\\
 & =  (v_n - \delta(R))\|\lambda z_{k,R}\|_1 \geq (v_n-\delta(R))\|M,\partial
  M\|, & 
\end{align*}
since $\lambda z_{k,R}$ gives a representative for the fundamental class
of $(M,\partial M)$ via the homotopy equivalence of pairs
$(V,V_{(0,\mu]})\to (M,\partial M)$.  
Letting $R$ tend to infinity, Claim \ref{c:volume} says that
$\delta(R)$ tends to zero, so we have
\[ \Vol(V) \geq v_n\|M,\partial M\|, \]
and the proposition is proved.
\end{proof}

Lemma \ref{l:lowerbound} and Proposition \ref{p:upperbound} together
immediately imply Theorem \ref{t:proportionality}.

\section{(Anti)Symmetrization of chains}\label{a:symm}
In this section we prove that the chain map $S$ defined in
Section \ref{ss:symm} is  chain homotopic to the identity.

We let $\Delta^n$ denote the standard $n$--simplex, and begin by
defining a ``coning'' operator for all $n$ and $k$
\[ c_n\co C_k(\Delta^n;\bR)\to C_{k+1}(\Delta^n;\bR) .\]
We define $c_n$ on a singular $k$--simplex $\psi\co \Delta^k\to
\Delta^n$ and then extend by linearity.  
Let $\iota\co \Delta^k\to \Delta^{k+1}$ be the affine map taking each
vertex $e_i$ of $\Delta^{k}$ to the vertex $e_{i+1}$ of
$\Delta^{k+1}$; the vertex $e_0$ is the only one missed by $\iota$.
Each point in
$\Delta^{k+1}$ is uniquely expressible convex combination of some point in $\iota(\Delta^{k})$
with $e_{0}$.
Let $m_n$ be the barycenter
of $\Delta^n$, and define 
\[ c_n(\psi)\left(t e_0 + (1-t)\iota(x)\right) = t m_n + (1-t)
\psi(x). \]
In words, $c_n(\psi)$ is $\psi$ coned to the barycenter $m_n$.  The
key fact about $c_n$ is that for any chain $a\in C_k(\Delta^n;\bR)$,
we have
\begin{equation}\label{chcon}
  \partial c_n(a) = a - c_n(\partial a).
\end{equation}
(In other words, $c_n$ is a chain contraction of $C_*(\Delta^n;\bR)$.)

We use the coning operator to inductively
define an element
$\mu_n\in C_{n+1}(\Delta^n;\bR)$,
and, for any space $Y$, a map
\[P_{n}^{(Y)} \co C_{n}(Y;\bR)\to C_{n+1}(Y;\bR).\]
As in the definition of symmetrization, we abuse notation by using $q\in S_{n+1}$ to denote the
unique affine map from $\Delta^n$ to itself which is equal to $q$ when
restricted to the vertices.  We start with $\mu_0 = 0$, and 
$P_0 = 0$ for all $Y$.  
We may assume $P_{n-1}^{(\Delta^n)}$ has been
defined, and set
\begin{equation}\label{mudef}
\mu_n =     \left(\frac{1}{|S_{n+1}|} \sum_{q\in
      S_{n+1}}\sign(q) c_n(q)\right) 
      - c_n(\Id_{\Delta^n})
      - c_n\left(P_{n-1}^{(\Delta^n)}(\partial \Id_{\Delta^n})\right).
\end{equation}
For any space $Y$, we define $P_n\co C_{n}(Y;\bR)\to C_{n+1}(Y;\bR)$
on  simplices, and then extend linearly; for $\phi\co\Delta^n\to Y$ a
singular $n$--simplex, define
\begin{equation}\label{Pdef}
  P_n^{(Y)}(\phi) =  \phi_\sharp(\mu_n).
\end{equation}
The maps $P_n^{(\cdot)}$ commute with continuous maps:
\begin{lemma}\label{l:pfunk}
  If $f\co X\to Y$ is any continuous map, and $n$ is an integer, then
  $f_\sharp \circ P_n^{(X)} = P_n^{(Y)}\circ f_\sharp$.
\end{lemma}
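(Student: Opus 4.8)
The plan is to observe that Lemma~\ref{l:pfunk} is a pure naturality statement, following immediately from the fact that the chain $\mu_n\in C_{n+1}(\Delta^n;\bR)$ appearing in \eqref{mudef} is a \emph{single fixed element depending only on $n$}, with no reference to the ambient space. Indeed, inspecting the recursion \eqref{mudef}, $\mu_n$ is built from the coning operator $c_n$ on $C_*(\Delta^n;\bR)$, from $\Id_{\Delta^n}$, and from $P_{n-1}^{(\Delta^n)}$ applied to $\partial\Id_{\Delta^n}$; by the inductive construction each of these is already defined, so $\mu_n$ is a well-defined chain in $C_{n+1}(\Delta^n;\bR)$ that is intrinsic to the standard simplex. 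The definition \eqref{Pdef} of $P_n^{(Y)}$ then only uses $\mu_n$ and the target $Y$ through the pushforward $\phi_\sharp$.

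The key step is then the following computation on a single singular $n$--simplex $\phi\co\Delta^n\to X$. Using the functoriality identity $(g\circ h)_\sharp = g_\sharp\circ h_\sharp$ for pushforwards of singular chains, together with the fact that $f_\sharp(\phi) = f\circ\phi$ when $\phi$ is a singular simplex, we get
\[
  f_\sharp\bigl(P_n^{(X)}(\phi)\bigr) = f_\sharp\bigl(\phi_\sharp(\mu_n)\bigr)
  = (f\circ\phi)_\sharp(\mu_n) = P_n^{(Y)}(f\circ\phi) = P_n^{(Y)}\bigl(f_\sharp(\phi)\bigr).
\]
So $f_\sharp\circ P_n^{(X)}$ and $P_n^{(Y)}\circ f_\sharp$ agree on singular simplices.

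Finally, both $f_\sharp\circ P_n^{(X)}$ and $P_n^{(Y)}\circ f_\sharp$ are $\bR$--linear maps $C_n(X;\bR)\to C_{n+1}(Y;\bR)$ (each is a composite of linear maps), and the singular $n$--simplices form a basis of $C_n(X;\bR)$, so agreement on this basis gives agreement as maps of chain modules. This completes the argument. I do not expect any real obstacle here: the only point requiring a moment's care is making explicit that the recursively defined $\mu_n$ is genuinely space-independent, which is immediate from \eqref{mudef}; everything else is the standard naturality of $(-)_\sharp$.
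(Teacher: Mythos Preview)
Your proof is correct and follows essentially the same route as the paper: reduce by linearity to a single singular simplex $\phi$, then use functoriality of $(-)_\sharp$ to obtain $f_\sharp\phi_\sharp(\mu_n)=(f\circ\phi)_\sharp(\mu_n)$ and recognize this as $P_n^{(Y)}(f_\sharp(\phi))$. Your additional remark that $\mu_n$ is intrinsic to $\Delta^n$ makes explicit the one point the paper leaves implicit, but otherwise the arguments coincide.
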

\begin{proof}
  Since all the maps involved are linear, it suffices to verify this
  for a single singular simplex $\phi\co \Delta^n\to X$.  We have:
  \begin{align*}
    (f_\sharp \circ P_n^{(X)})(\phi) & =  f_\sharp\phi_\sharp(\mu_n) 
      = (f\circ\phi)_\sharp(\mu_n) = P_n^{(Y)}(f \circ \phi) =
      (P_n^{(Y)}\circ f_\sharp)(\phi).
  \end{align*}
\end{proof}

\begin{lemma}
  $P_*^{(X)}$ as defined in equation \eqref{Pdef} is a chain homotopy between
  $S$ and $\Id_{C_*(X;\bR)}$.
\end{lemma}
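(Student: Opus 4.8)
The plan is to prove, by induction on $n$ (with the convention $P_{-1}=0$) and simultaneously for \emph{all} spaces, the chain homotopy identity
\[ \partial\circ P_n^{(X)} + P_{n-1}^{(X)}\circ\partial \;=\; S - \Id \qquad\text{on } C_n(X;\bR). \]
The first reduction is to the universal example. Every operator in sight commutes with the chain map $\phi_\sharp$ induced by a continuous map $\phi\co\Delta^n\to X$: for $\partial$ this is standard, for $P_n^{(\cdot)}$ and $P_{n-1}^{(\cdot)}$ it is Lemma~\ref{l:pfunk}, and for $S$ it is immediate from \eqref{symmsum}, since precomposing the summands of $S(\sigma)$ with permutations commutes with postcomposing by $\phi$. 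As any singular $n$--simplex $\phi$ equals $\phi_\sharp(\Id_{\Delta^n})$, it suffices to check the identity on the single chain $\Id_{\Delta^n}\in C_n(\Delta^n;\bR)$ for every $n$. For $n=0$ this is trivial: $P_0=0=P_{-1}$ and $S$ is the identity on $C_0$ because $S_1$ is trivial, so both sides vanish.

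For the inductive step, assume the identity in degree $n-1$ for all spaces. By \eqref{Pdef}, $P_n^{(\Delta^n)}(\Id_{\Delta^n}) = \mu_n$, so what must be shown is
\[ \partial\mu_n + P_{n-1}^{(\Delta^n)}(\partial\,\Id_{\Delta^n}) \;=\; S(\Id_{\Delta^n}) - \Id_{\Delta^n}, \]
where, by \eqref{symmsum}, $S(\Id_{\Delta^n}) = \frac{1}{|S_{n+1}|}\sum_{q\in S_{n+1}}\mathrm{sgn}(q)\,q$ (with $q$ also denoting the affine self-map of $\Delta^n$ it induces). Expanding $\partial\mu_n$ via the definition \eqref{mudef} and the cone identity \eqref{chcon}, $\partial c_n(a) = a - c_n(\partial a)$, splits each of the three terms of $\mu_n$ into an ``un-coned'' piece and a ``coned'' piece. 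The un-coned pieces add up to $S(\Id_{\Delta^n}) - \Id_{\Delta^n} - P_{n-1}^{(\Delta^n)}(\partial\,\Id_{\Delta^n})$, so after adding $P_{n-1}^{(\Delta^n)}(\partial\,\Id_{\Delta^n})$ the left-hand side above equals $S(\Id_{\Delta^n}) - \Id_{\Delta^n}$ plus the sum of the coned pieces, which, by linearity of $c_n$ and the identity $\frac{1}{|S_{n+1}|}\sum_q\mathrm{sgn}(q)\,\partial q = \partial S(\Id_{\Delta^n})$, equals
\[ c_n\!\left(\partial\,\Id_{\Delta^n} + \partial P_{n-1}^{(\Delta^n)}(\partial\,\Id_{\Delta^n}) - \partial S(\Id_{\Delta^n})\right). \]
Applying the inductive hypothesis (for the space $\Delta^n$) to the chain $\partial\,\Id_{\Delta^n}\in C_{n-1}(\Delta^n;\bR)$ and using $\partial\partial=0$ gives $\partial P_{n-1}^{(\Delta^n)}(\partial\,\Id_{\Delta^n}) = S(\partial\,\Id_{\Delta^n}) - \partial\,\Id_{\Delta^n}$, so the argument of $c_n$ simplifies to $S(\partial\,\Id_{\Delta^n}) - \partial S(\Id_{\Delta^n})$, which vanishes because $S$ is a chain map. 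Hence the coned contribution is $c_n(0)=0$, the two sides agree, and the induction closes.

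The only genuinely non-formal ingredient is the chain-map property $S\circ\partial = \partial\circ S$ (equivalently $S(\partial\,\Id_{\Delta^n}) = \partial S(\Id_{\Delta^n})$): this is the one point at which the usual simplicial bookkeeping — matching each face of $\sigma\circ q$ with a face of $\sigma$ under the permutation of the remaining vertices induced by $q$ — cannot be avoided, and I would either record it as a short direct computation or cite it from Lemma~\ref{l:chhom}. The other point requiring care is purely organizational: $\mu_n$, and hence $P_n^{(Y)}$ for \emph{every} $Y$, is only defined once $P_{n-1}^{(\Delta^n)}$ is, so the definition of the operators and the verification of the homotopy identity must be carried out in the same induction, and the inductive hypothesis must be phrased uniformly over all spaces $Y$ because the step for $X$ invokes it for $Y=\Delta^n$. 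Everything else reduces to linearity and the two named identities \eqref{chcon} and \eqref{Pdef}.
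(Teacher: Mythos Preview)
Your proof is correct and follows essentially the same approach as the paper's: both argue by induction on $n$, expand $\partial\mu_n$ using the cone identity \eqref{chcon}, invoke the inductive hypothesis on $\partial\,\Id_{\Delta^n}$ together with $\partial^2=0$, and cancel the remaining coned terms using that $S$ is a chain map. The only organizational difference is that you first reduce to the universal simplex $\Id_{\Delta^n}$ via naturality (Lemma~\ref{l:pfunk}) and then compute, whereas the paper carries $\phi_\sharp$ through the computation and applies Lemma~\ref{l:pfunk} at the end; the underlying computation is identical.
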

\begin{proof}
  In this proof we'll write $P_k^{(X)}$ as $P_k$.

  It suffices to show that, for each $n$ and each singular $n$--simplex $\phi$,
  \[ \partial P_n(\phi) = S(\phi) - \phi - P_{n-1}(\partial \phi). \]
  For $n= 0$, this is immediate, so we suppose that $n>0$ and argue inductively.
  We compute from \eqref{Pdef} and \eqref{chcon},
  \begin{align*}
  \partial P_n(\phi) = & \phi_\sharp\Bigg(
  \frac{1}{|S_{n+1}|}\sum_{q\in S_{n+1}} \sign(q) (q - c_n(\partial q))\\
   & \quad\quad  - \Id_{\Delta^n} + c_n(\partial \Id_{\Delta^n}) 
     - P_{n-1}(\partial \Id_{\Delta^n}) + c_n \partial \left(P_{n-1}^{(\Delta^n)}(\partial \Id_{\Delta^n})\right)\Bigg)
  \end{align*}
By induction on $n$ (and the fact that $\partial^2 = 0$) we have
\[ \partial P_{n-1}^{(\Delta^n)}(\partial \Id_{\Delta^n}) = S(\partial
\Id_{\Delta^n}) - \partial \Id_{\Delta^n}, \]
so we can rewrite
\begin{align*}
\partial P_n(\phi) &  =  \phi_\sharp\Bigg( 
\frac{1}{|S_{n+1}|}\sum_{q\in S_{n+1}} \sign(q) (q - c_n(\partial q))\\
   &  \quad\quad\quad - \Id_{\Delta^n} + c_n(\partial \Id_{\Delta^n}) 
     - P_{n-1}^{(\Delta^n)}(\partial \Id_{\Delta^n}) + c_n (S(\partial\Id_{\Delta^n}) -
     \partial \Id_{\Delta^n}) \Bigg)\\
 & = \phi_\sharp\Bigg(S(\Id_{\Delta^n}) - \Id_{\Delta^n} - P_{n-1}^{(\Delta^n)}(\partial \Id_{\Delta^n})\\
 &   \quad\quad\quad  + c_n(S(\partial \Id_{\Delta^n} )) - \frac{1}{|S_{n+1}|}\sum_{q\in S_{n+1}} \sign(q) c_n(\partial
     q) \Bigg)
\end{align*}
The last two terms exactly cancel because $c_n$ and $\partial$ are both linear, so
\[\frac{1}{|S_{n+1}|}\sum_{q\in S_{n+1}} \sign(q) c_n(\partial
     q) = c_n\partial \left( \frac{1}{|S_{n+1}|}\sum_{q\in S_{n+1}}
     \sign(q) q\right) = c_n\partial S(\Id_{\Delta^n})\]
and, since $S$ is a chain map, $c_n(\partial S(\Id_{\Delta^n}))=c_n(S(\partial \Id_{\Delta^n})) $.
We therefore have
\[ \partial P_n(\phi) = \phi_\sharp\left(S(\Id_{\Delta^n}) -
\Id_{\Delta^n} - P_{n-1}^{(\Delta^n)}(\partial \Id_{\Delta^n})\right).  \]

We now invoke Lemma \ref{l:pfunk}, which implies that
\begin{equation*}\label{differentP}
\phi_\sharp\left(P_{n-1}^{(\Delta^n)}(\partial \Id_{\Delta^n})\right)
=P_{n-1}(\partial \phi),
\end{equation*}
and so
\[ \partial P_n(\phi)= S(\phi) -
\phi - P_{n-1}(\partial \phi).\]
\end{proof}

\section*{Acknowledgments}
We thank Danny Calegari,  Mike Davis, Bill Thurston, Clara L\"oh,
and Shmuel Weinberger for useful
conversations.   We also thank Alexander
Nabutovsky for pointing out to us
that our work applies to the homology spheres of Ratcliffe and
Tschantz. 

Jason Manning did part of this work during visits to
the Caltech mathematics department and 
the Cornell mathematics department, and
thanks Caltech and Cornell for their hospitality.

\small

\end{document}